\newtheorem{theorem}{Theorem}[section]
\newtheorem{corollary}[theorem]{Corollary}
\newtheorem{lemma}[theorem]{Lemma}
\newtheorem{proposition}[theorem]{Proposition}
\theoremstyle{definition}
\newtheorem{remark}[theorem]{Remark}
\newcommand{\ainf}{a^\infty}
\newcommand{\Binf}{B^\infty}
\newcommand{\pinf}{p^\infty}
\newcommand{\rinf}{r^\infty}
\newcommand{\uinf}{u^\infty}
\newcommand{\usim}{u^\sim}
\newcommand{\peff}{p^{\text{eff}}}
\newcommand{\decayfunc}{\Psi}
\newcommand{\rrr}{\mathfrak{r}}
\newcommand{\sss}{\mathfrak{s}}
\newcommand{\altil}{\tilde{\alpha}}
\newcommand{\kernel}{g}
\newcommand{\revision}[1]{{ #1}}
\newcommand{\Margin}[1]{}
\newcommand{\rerevision}[1]{{ #1}}
\newcommand{\rerevisionown}[1]{{ #1}}
\newcommand{\MMargin}[1]{}
\title[subdiffusion with counteracting terms]
{Decay rates to equilibrium in a nonlinear subdiffusion equation with two counteracting terms
} 
\author[Barbara Kaltenbacher]{}
\subjclass{Primary: 35R11; Secondary: 35K58.}
\keywords{nonlinear subdiffusion, time-fractional Fisher-KPP model, space-dependent coefficients, counteracting terms, decay to equilibrium.}
\thanks{This research was funded in part by the Austrian Science Fund (FWF) 
[10.55776/F100800]. 
}
\thanks{$^*$Corresponding author: Barbara Kaltebacher}
\begin{document}
\maketitle

\centerline{\scshape
Barbara Kaltenbacher$^{{\href{mailto:barbara.kaltenbacher@aau.at}{\textrm{\Letter}}}*1}$
}

\medskip

{\footnotesize
 \centerline{$^1$Department of Mathematics,
Alpen-Adria-Universit\"at Klagenfurt, Austria}
} 

\medskip


\bigskip

 \centerline{(Communicated by Handling Editor)}


\begin{abstract}
In this paper we prove convergence to a steady state as $t\to\infty$ for solutions to the subdiffusion equation 
\[
\partial_t^\alpha u - \mathbb{L} u = q(x)u - p(x)f(u) + r
\]
with the exponential ($\alpha=1$) or power law ($\alpha\in[0,1)$) rates under mild conditions on the coefficients $p$, $q$, the nonlinearity $f$, the source $r$, and the elliptic operator $\mathbb{L}$.
\end{abstract}


\section{Introduction}
The model 
\[
u_t - \mathbb{L} u = q(x)u - p(x)f(u) \revision{+r}
\]
\Margin{RevX 1.}
and its subdiffusion counterpart
\[
\partial_t^\alpha u - \mathbb{L} u = q(x)u - p(x)f(u) \revision{+r} 
\]
with an elliptic differential operator $\mathbb{L}$ and the Djirbashian-Caputo fractional derivative of order $\alpha\in(0,1)$ appears in a wide range of applications from population dynamics \cite{Fisher:1937,KPP} via chemical reactions \cite{FrankZeldovich} to phase separation \cite{AllenCahn:1972,PenroseFife:1990}.

Here $p$, $q$, 
and often also 
$f'$
are typically nonnegative, so that the two terms $q(x)u$ and  $p(x)f(u)$ play counteracting roles in the PDE.

While spatial inhomogeneity of the coefficients $p$ and $q$ arises quite naturally in the mentioned applications, the literature on the non-constant setting is quite scarce; this is probably due to the fact that most of the interest went into investigation of travelling wave solutions, which are largely inhibited by spatially varying coefficients $p$ and $q$, though.
The large time limit has already been studied in \cite{AronsonWeinberger} and steady state solutions of the Fisher-KPP equation as well as their relations to travelling wave solutions have been subject of intensive research, see, e.g., \cite{BerestyckiNadinPerthameRyzhik:2009,CabreRoquejoffre:2012} for the non-local setting. 
The same holds true for the steady state solutions of the Allen-Cahn equation, that can be characterized as minimizers of an appropriate energy functional, see, e.g., \cite{AkagiSchimpernaSegatti:2016,NinomiyaHirokazuTaniguchi:2005}; in particular, we point to \cite{DuYangZhou:2020}, \cite[Section 6.4.2]{Jin:2021} for the time-fractional case.

However, a quantification of convergence to steady state in the time fractional case 
\revision{of the setting with two competing terms still seems to be missing so far. 
}
\Margin{RevY (4)}
We point to the recent result \cite[Proposition 4.7.]{LuchkoYamamoto:2025} in case $p(x)\equiv1$, \rerevisionown{but with $q\leq0$, thus not counteracting the nonlinear term.}
Providing such an analysis (allowing also for spatially varying coefficients) is the aim of this paper.

We mention in passing that this work is also motivated by its need for the convergence proof of a fixed point scheme used to reconstruct $p(x)$ and $q(x)$ from indirect observations in \cite{BB19}, where also more details on the model aspect and related literature can be found.
\rerevisionown{For this purpose we also prove convergence to zero of the time derivative $u_t$.}

\medskip

Consider the initial boundary value problem
\begin{equation}\label{ibvp}
\begin{aligned}
\partial_t^\alpha u-\mathbb{L}u&=qu-pf(u)+r \text{ in }(0,T)\times\Omega, \\ 
B u&=a \text{ on }(0,T)\times\partial\Omega, \\
u&=u^0\text{ in }\{0\}\times\Omega
\end{aligned}
\end{equation}
where $\Omega\subseteq\mathbb{R}^d$, $d\in\{1,2,3\}$ is a smooth and bounded domain and the operator \revision{$B$ 
is either the Dirichlet or the Neumann trace.
Moreover, the space and time dependent function $r(t,x)$ models the presence of a source or driving term.}
\Margin{RevX 1.}
\Margin{RevX 3.}

Without loss of generality we assume 
\begin{equation}\label{f0fprime0}
f(0)=0,\qquad f'(0)=0
\end{equation}
as this can be achieved without changing the problem by making the replacements 
\rerevision{
\[
f\to f(u)-f'(0)u-f(0), \quad q\to q+f'(0)p, \quad r\to r-f(0)p.
\]
In fact, this replacement includes the setting $f(\xi)=o(\xi)$ as $\xi\to0$ standardly used in parabolic problems, as long as $f$ is continuously differentiable, since a possible nonzero derivative $f'(0)$ can be shifted into the $q$ term.} 
\MMargin{RevY (1)}

We will require $p$ and $f'$ to be positive, but do not impose any sign conditions on $r$.
Essential sign (in fact, monotonicity) conditions that indirectly also affect $q$ will be imposed below, cf. \eqref{pinfty}, \eqref{boundpqf_int} below.

The elliptic operator is assumed to be of the form
\begin{equation}\label{ellipticityL}
\mathbb{L}=
\nabla\cdot(D\nabla)
\text{ with }
\lambda_{\min}(D(x))\geq \underline{c}_D>0 \quad \text{for a.e. }x\in\Omega
\end{equation}
\Margin{RevX 2.}
\Margin{RevY (1)}
satisfying (along with the boundary conditions) elliptic regularity
\begin{equation}\label{ellipticregularityL}
\|v\|_{H^2(\Omega)}\leq C_{\text{ell}}^\Omega
\Bigl(\|\mathbb{L}v\|_{L^2(\Omega)}
+\|v\|_{L^2(\Omega)}
\Bigr), \quad 
\text{ for all }v\in C^\infty(\Omega), \ Bv=0\text{ on }\partial\Omega. 
\end{equation}
\revision{which follows under sufficient regularity of the diffusion matrix $D\in C^1(\Omega)$ defining $\mathbb{L}$ as well as the domain $\partial\Omega\in C^2$, cf. e.g., \cite{GilbargTrudinger} 
}
\Margin{RevY (3)}

The Djirbashian-Caputo derivative in \eqref{ibvp} can be written as 
\[
(\partial_t^\alpha v)(t) = (\kernel^\alpha*v)(t)=\int_0^t \kernel^\alpha(t-s)v(s)\, ds
\text{ with }\kernel^\alpha(s)= \frac{1}{\Gamma(1-\alpha)} \, s^{-\alpha},
\]
and satisfies (among other properties) the coercivity estimate \cite[Lemma 1]{Alikhanov:10}
\begin{equation}\label{coercivityDC}
v(t)\,(\partial_t^\alpha v)(t)\geq\frac12 \partial_t^\alpha [v^2](t),
\qquad v\in W^{1,1}(0,T).
\end{equation}

\section{Main results}
\label{sec:mainresults}

Our goal is to prove power law / exponential decay rates on the difference between the solution $u$ to \eqref{ibvp} and a steady state $\uinf$
\begin{equation}\label{util}
\usim(t,x)=u(t,x)-\uinf(x),
\end{equation}
as well as on the time derivative
\begin{equation}\label{util_t}
\usim_t(t,x)=u_t(t,x) \ \text{ or }\ \partial_t^\alpha\usim(t,x)=\partial_t^\alpha u(t,x),
\end{equation}
provided $r$ tends to $\rinf$ fast enough. 
More precisely, we prove power law / exponential convergence 
\begin{equation}\label{decay_usim}
\|u(t)-\uinf\|_{H^s(\Omega)}^2\leq C \decayfunc(t) :=
C\begin{cases}t^{-\alpha} &\text{ in case }\alpha\in(0,1)\\
e^{-\omega t} &\text{ in case }\alpha=1\end{cases}
\quad\text{ for all }t\in(0,T),
\end{equation} 
for some $C$, $\omega>0$ independent of $T$ and $s\in\{0,1,2\}$, cf. Theorems~\ref{thm:decay_usim}, \ref{thm:decay_usim_nosmallness} (i).
In some applications, also decay of the time derivative to zero is relevant. 
It is, e.g., needed for proving contractivity of the fixed point operator $\mathcal{T}$ on which uniqueness as well as convergence of the iterative scheme for reconstructing $p(x)$, $q(x)$ from indirect observations in \cite{BB19} relies.
We here show
\begin{equation}\label{decay_u_t}
\|\partial_t^\alpha u(t)\|_{
H^s
(\Omega)}^2\leq C \decayfunc(t), \quad 
\|u_t(t)\|_{H^s(\Omega)}^2\leq C \decayfunc(t) 
\quad t>0
\end{equation} 
$s\in\{-1,0,1,2\}$, cf. Theorems~\ref{thm:decay_usim}, \ref{prop:decay_usim_t}, \ref{thm:decay_usim_nosmallness}, Remark~\ref{rem:decaydtalphau} under various different assumptions.

\medskip 

The essential tools in our proofs are 
\begin{itemize}
\item characterization of the difference $\usim$ and its time derivative as solutions to initial boundary value problems;
\item energy estimates, using the coercivity and regularity assumptions \eqref{ellipticityL}, \eqref{ellipticregularityL}, \eqref{coercivityDC}; 
\item use (and partly also development) of Gronwall type estimates; alternatively we also employ Tauberian theorems for quantifying the large time asymptotic behaviour.
\end{itemize}

We here focus on the practically relevant low regularity setting for the coefficients
\begin{equation}\label{LrrrLsss}
p\in L^\rrr(\Omega), \qquad q\in L^\sss(\Omega).
\end{equation}
While $\sss\geq2$ sufficies for all results here, we need some further restrictions, related to the growth of $f$, on $\rrr$ cf.  \eqref{growth}, \eqref{cond_r_kappa2_hi} below.

\medskip

Results related to \eqref{decay_usim}, 
\rerevisionown{besides \cite{LuchkoYamamoto:2025}, which we discuss in more detail in Remark~\ref{rem:LuchkoYamamoto2025} below,}
are, e.g., 
\cite[Sections 6.4.1 and 6.4.2]{Jin:2021}, \cite[Section 4.2.6]{BB2},
on the case $p\equiv1$, $q=0$, and 
\cite[Section 4.6]{KubicaRyszewskaYamamoto:2020}, 
where the latter is on non-autonomous but linear problems. 
However, the current setting is not covered by these results. 
In particular, the approach from \cite[Sections 6.4.1]{Jin:2021} (see also \cite[Section 4.2.6]{BB2}) of representing the solution via the variation of constants formula 
\[
u(t)=\mathbb{E}^\alpha(t)u_0+\int_0^t \mathbb{E}^\alpha(s)\left[-p\,f(u(s)+q\,u(s)+r(s)\right]\,ds  
\text{ in }L^2(\Omega)
\]
using the parametrized family of operators 
\[
(\mathbb{E}^\alpha(t)v)(x)=
\sum_{j\in\mathbb{N}} E_{\alpha,1}(-\lambda_j t^{\alpha})\langle v,\varphi_j\rangle_{L^2(\Omega)} \varphi_j(x)
\]
with the Mittag-Leffler function $E_{\alpha,1}$ and an eigensystem $(\lambda_j,\varphi_j)_{j\in\mathbb{N}}$ of the elliptic operator,
would require higher spatial regularity on the coefficients $p$ and $q$, that cannot be reproduced by the fixed point operator 
$\mathcal{T}$ 
on which the reconstruction scheme in \cite{BB19} relies.
It is thus important to be able to work in the low regularity coefficient setting \eqref{LrrrLsss}.

\medskip

Remarkable on the forward problem are the counteracting roles of the $p$ and $q$ terms, when having nonnegativity of these two coefficients as well as a monotonically increasing function $f$ in mind, as typical in applications.
Depending on the monotonicity behaviour of $f$ and on which of the terms dominates, this can result in exponential decay or blow-up of solutions 
\revision{(for the latter we point to \cite{FloridiaLiuYamamoto:2023} for the fractional and to \cite{QuittnerSouplet:2019} for the parabolic case)}
\Margin{RevY (2)}
or in convergence to a steady state $\uinf$ as time tends to infinity, that has to satisfy the elliptic PDE
\begin{equation}\label{bvp}
\begin{aligned}
-\mathbb{L}\uinf&=q\uinf-pf(\uinf)+\rinf \text{ in }\Omega \\ 
\Binf \uinf&=\ainf \text{ on }\partial\Omega. 
\end{aligned}
\end{equation}
Existence of a solution $\uinf\in H^2(\Omega)$ holds under mild conditions on the coefficients and the function $f$ from monotonicity arguments, see, e.g., \cite[Theorem 4.10]{Troeltzsch2010}.
\Margin{RevX 4.}
\revision{
\begin{proposition}
Let \eqref{ellipticityL}, \eqref{ellipticregularityL}, \eqref{LrrrLsss} hold with  $\rrr$, $\sss\geq 2$, $\rrr$, $\sss> \frac{d}{2}$, $f\in C^2(\Omega)$, $f(0)=0$, $f'\geq0$. Then for any $\rinf\in L^2(\Omega)$, 
$\ainf\in H^{1/2}(\partial\Omega)\cap L^{\tilde{s}}(\partial\Omega)$, $\tilde{s}>d-1$ in the Neumann case 
($\ainf\in H^{3/2}(\partial\Omega)\cap L^\infty(\partial\Omega)$ 
in the Dirichlet case), 
there exists a unique solution $\uinf\in H^2(\Omega)$ of \eqref{bvp}. 
\end{proposition}
\begin{proof}
Existence and uniqueness of a solution $\uinf\in H^1(\Omega)\cap L^\infty(\Omega)$ follows from \cite[Theorem 4.10]{Troeltzsch2010}. 
The higher regularity $\uinf\in H^2(\Omega)$ can be easily bootstrapped from \eqref{ellipticregularityL} and the fact that the right hand side of \eqref{bvp} is in $L^2(\Omega)$.
\end{proof}
}

Given a solution $\uinf$ of \eqref{bvp}, we impose  
a sign condition on the potential in the linearization of the steady state equation \eqref{bvp}
\begin{equation}\label{pinfty}
\pinf(x):=p(x)f'(\uinf(x))-q(x)\geq\underline{c}>0\quad x\in\Omega.
\end{equation}

Moreover, we require a growth condition of $f$
\begin{equation}\label{growth}
|f''(\xi)|\leq c_2+C_2|\xi|^{\kappa_2}, \quad
\xi\in\mathbb{R},
\end{equation}
a related requirement on the summability index $\rrr$ of the function $p$
\begin{equation}\label{cond_r_kappa2_lo}
d=1, \quad \rrr=\infty, \quad \kappa_2=1,
\end{equation}
or
\begin{equation}\label{cond_r_kappa2_hi}
\begin{aligned}
&d=1:\quad&& \rrr\geq2 \quad&& \kappa_2\in[0,\infty]\\
&d=2:\quad&& \rrr>2 \quad&& \kappa_2\in[0,\infty)\\
&d=3:\quad&& \rrr\geq6 \quad&& \kappa_2\in[0,2-\tfrac{6}{\rrr}],
\end{aligned}
\end{equation}
and a decay condition on $r-\rinf$ 
\begin{equation}\label{decay_r}
\|r(t)-\rinf\|_{X}^2\leq C_r 
\decayfunc(t)
\quad t>0
\end{equation} 
with $\decayfunc$ as in \eqref{decay_usim}, 
$\omega=
\omega_r
>0$ 
and $X\in\{H^{-1}(\Omega),\, L^2(\Omega)\}$. 

To simplify exposition, we restrict the setting to Dirichlet or Neumann conditions with time constant data, that is 
\begin{equation}\label{bndy_const}
a(t,x)=\ainf(x), \quad B u=\Binf u=\partial_\nu u\text{ or }B u=\Binf u=u, 
\end{equation}
but expect our results to remain valid, provided 
$a(t,x)\to \ainf(x)$, $B\to \Binf$, as $t\to\infty$ sufficiently fast. 
We also expect the analysis to carry over to more general boundary conditions, such as impedance or mixed conditions.
\\
Correspondingly we use $V=H^1(\Omega)$ or $V=H_0^1(\Omega)$ as function space (and by a slight abuse of notation, write $H^{-1}(\Omega)$ for its topological dual) for $u(t)$.

\medskip

\begin{theorem}\label{thm:decay_usim}
For $u_0\in H^1(\Omega)$, $f\in C^2(\mathbb{R})$, under  conditions \eqref{f0fprime0} \eqref{ellipticityL}, \eqref{ellipticregularityL},  
\eqref{pinfty}, \eqref{growth},  
$r\in L^\infty(0,\infty;X)$, \eqref{decay_r}, \eqref{bndy_const},
there exist $\rho_0>0$, $C>0$, 
independent of $T$ such that $\usim=u-\uinf$ (where $u$ solves \eqref{ibvp} and $\uinf$ solves \eqref{bvp}) satisfies the bound
\begin{equation}\label{bound_usim}
\begin{aligned}
&\frac{c}{2}\, \left(\kernel^{1-\alpha}*\|u-\uinf\|_{H^{s+1}(\Omega)}^2\right)(t)
+\|u(t)-\uinf\|_{H^s(\Omega)}^2\\
&\leq \|u(0)-\uinf\|_{H^s(\Omega)}^2 + \frac{C_r}{\Gamma(\alpha)\,\alpha(1-\alpha)} 
\end{aligned}
\end{equation} 
\revision{ for some constant $c>0$}
\Margin{RevX 5.}
as well as the decay estimate
\eqref{decay_usim}
provided 
\begin{equation}\label{smallness_init}
\|u(0)-\uinf\|_{H^s(\Omega)}<\rho_0, 
\end{equation}
with 
\begin{itemize}
\item[(lo)] $s=0$, $X=H^{-1}(\Omega)$ under condition \eqref{cond_r_kappa2_lo};
\item[(hi)] $s=1$, $X=L^2(\Omega)$ under condition  \eqref{cond_r_kappa2_hi}. 
\end{itemize}
In case 
$T=\infty$, we also have 
\begin{equation}\label{decay_usim_Tauber}
\|\partial_t^\alpha u(t)\|_{H^{s-1}(\Omega)}^2+\|u(t)-\uinf\|_{H^{s+1}(\Omega)}^2\lesssim C t^{-\alpha}
\quad \text{ as }t\to\infty.
\end{equation} 
Here $\eta(t) \lesssim C\,t^{-\gamma}$ as $t\to\infty$ means 
$\limsup_{t\to\infty} \eta(t)\, t^\gamma\leq C$.
\end{theorem}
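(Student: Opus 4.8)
The plan is to derive the energy estimate \eqref{bound_usim} by testing the PDE solved by $\usim = u - \uinf$, and then to convert this estimate into the decay rate \eqref{decay_usim} via a Gronwall-type argument, with a separate Tauberian argument for \eqref{decay_usim_Tauber}. First I would subtract the steady-state equation \eqref{bvp} from the evolution equation \eqref{ibvp}, using that $\partial_t^\alpha u = \partial_t^\alpha \usim$ and that the boundary data are time-constant \eqref{bndy_const}, so that $\usim$ solves a homogeneous-boundary initial value problem
\begin{equation*}
\partial_t^\alpha \usim - \mathbb{L}\usim = q\,\usim - p\bigl(f(u) - f(\uinf)\bigr) + (r - \rinf), \qquad B\usim = 0, \qquad \usim(0) = u_0 - \uinf.
\end{equation*}
The crucial algebraic step is to expand $f(u) - f(\uinf) = f'(\uinf)\usim + R(\usim)$ with a Taylor remainder $R$ controlled by \eqref{growth}, so that the linear part of the right-hand side becomes $-\bigl(pf'(\uinf) - q\bigr)\usim = -\pinf\usim$, which is dissipative by the sign condition \eqref{pinfty}.

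Next I would carry out the energy estimate. For case (lo), $s=0$: test with $\usim$ in $L^2(\Omega)$, use the coercivity inequality \eqref{coercivityDC} to get $\tfrac12\partial_t^\alpha\|\usim\|_{L^2}^2$ on the left from the fractional-derivative term, integrate the elliptic term by parts using \eqref{ellipticityL} to produce a term bounded below by $\underline{c}_D\|\nabla\usim\|_{L^2}^2$ plus $\|\sqrt{d}\,\usim\|_{L^2}^2$, and absorb $-\int\pinf\usim^2\,dx \le -\underline{c}\|\usim\|_{L^2}^2$. The nonlinear remainder $\int p\,R(\usim)\usim\,dx$ is estimated with \eqref{growth}, Hölder with the index $\rrr$, and the Sobolev embedding $H^1(\Omega)\hookrightarrow L^p(\Omega)$ valid in the regime \eqref{cond_r_kappa2_lo} (where $d=1$, $\rrr=\infty$, $\kappa_2=1$, so $R(\usim)\usim\sim\usim^3$ and $H^1\hookrightarrow L^\infty$ in 1D); this term is cubic in $\|\usim\|_{H^1}$, hence under the smallness assumption \eqref{smallness_init} it can be absorbed into the coercive $H^1$ part (this is where $\rho_0$ enters). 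The source term is handled by $\langle r-\rinf,\usim\rangle \le \tfrac{1}{2\epsilon}\|r-\rinf\|_{H^{-1}}^2 + \tfrac\epsilon2\|\usim\|_{H^1}^2$, again absorbed. For case (hi), $s=1$: test instead with $-\mathbb{L}\usim$ (or equivalently work with $\|\usim\|_{H^1}$ via $\|\mathbb{L}\usim\|_{L^2}$ and elliptic regularity \eqref{ellipticregularityL}), which requires $X=L^2$ for the source and pushes the nonlinear term into the regime \eqref{cond_r_kappa2_hi}. Collecting terms yields a differential inequality
\begin{equation*}
\partial_t^\alpha\|\usim\|_{H^s}^2 + c\,\|\usim\|_{H^{s+1}}^2 \le C_r\,\decayfunc(t),
\end{equation*}
and convolving with $k^{1-\alpha}$ and using $k^{1-\alpha}*\partial_t^\alpha v = v - v(0)$ together with $k^{1-\alpha}*\decayfunc \le \tfrac{1}{\Gamma(\alpha)\alpha(1-\alpha)}$ (a direct Beta-function computation) gives \eqref{bound_usim}.

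To extract the decay rate \eqref{decay_usim} I would apply a fractional Gronwall / comparison lemma: from $\partial_t^\alpha y(t) + c\,y(t) \le C_r\decayfunc(t)$ with $y = \|\usim\|_{H^s}^2$ (using $\|\usim\|_{H^{s+1}}^2 \ge \tilde c\,\|\usim\|_{H^s}^2$ by Poincaré), one concludes $y(t) \lesssim \decayfunc(t)$ — for $\alpha = 1$ by the classical Gronwall lemma giving $e^{-\omega t}$, and for $\alpha\in(0,1)$ by the known fact that solutions of $\partial_t^\alpha y + cy \le$ (something decaying like $t^{-\alpha}$) decay like $t^{-\alpha}$, which is exactly the algebraic decay rate of $E_{\alpha,1}(-ct^\alpha)$ and is presumably one of the Gronwall-type estimates the paper develops or cites. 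Finally, \eqref{decay_usim_Tauber} for $T=\infty$ I would obtain by the Tauberian route: apply the Laplace transform to the differential inequality, use that the transform of $\partial_t^\alpha\usim$ carries a factor $z^\alpha$, derive the small-$z$ behaviour of the transform of $\|\usim\|_{H^{s+1}}^2$ and of $\|\partial_t^\alpha u\|_{H^{s-1}}^2$, and invoke a Karamata–Tauberian theorem to transfer $z\to0$ asymptotics into $t\to\infty$ asymptotics, yielding the $t^{-\alpha}$ rate on the stronger norms.

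The main obstacle I anticipate is the absorption of the nonlinear remainder term $\int p\,R(\usim)\,w\,dx$ (with $w = \usim$ or $w = -\mathbb{L}\usim$) into the coercive part uniformly in $T$: one must verify that the Sobolev/Hölder estimates genuinely close under the precise arithmetic of \eqref{growth} and \eqref{cond_r_kappa2_lo}/\eqref{cond_r_kappa2_hi} in each dimension $d\in\{1,2,3\}$, and — more delicately — that the smallness radius $\rho_0$ can be chosen once and for all, i.e. that the bound \eqref{bound_usim} does not let $\|\usim(t)\|_{H^s}$ escape the ball on which the absorption was justified. This is the usual bootstrap/continuation subtlety in nonlinear energy methods and is the point where the argument must be set up carefully (e.g. via a continuity argument in $T$). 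A secondary technical point is making the fractional Gronwall step fully rigorous when the forcing $\decayfunc$ itself has the critical $t^{-\alpha}$ singularity at $t=0^+$, which is why the convolved form \eqref{bound_usim} with the finite constant $\tfrac{C_r}{\Gamma(\alpha)\alpha(1-\alpha)}$ is the natural intermediate object.
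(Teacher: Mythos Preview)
Your proposal is correct and follows essentially the same route as the paper: write the PDE for $\usim$ with the Taylor remainder, test to get an energy inequality of the form $\partial_t^\alpha\|\usim\|_{H^s}^2 + c\|\usim\|_{H^{s+1}}^2(1-\Phi(\|\usim\|_{H^s})) \le \tilde C\|r-\rinf\|_X^2$, close the nonlinear term by a barrier/continuation argument under \eqref{smallness_init}, then convolve with $k^{1-\alpha}$ for \eqref{bound_usim}, apply a fractional Gronwall lemma (comparison with $E_{\alpha,1}(-c_0 t^\alpha)$) for \eqref{decay_usim}, and use a Laplace--Tauberian argument on the two-term inequality for \eqref{decay_usim_Tauber}. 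Your anticipated obstacle is exactly the point the paper handles explicitly: it shows that if $\Phi(\eta(0)+\text{const})<1$ then $\Phi(\eta(t))\le 1$ for all $t$ by assuming a first escape time $t_*$ and deriving a contradiction from the bound \eqref{bound_eta}.

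One technical correction in case (hi): testing with $-\mathbb{L}\usim$ alone does not quite close. The paper tests with $\mu\usim-\mathbb{L}\usim$ for a large $\mu$ and, crucially, does \emph{not} integrate $\int\pinf\usim\,\mathbb{L}\usim\,dx$ by parts (that would produce $\nabla\pinf$, which is unavailable at the regularity \eqref{LrrrLsss}); instead it bounds this cross term by $\tfrac12\|\pinf\|_{L^\infty}^2\|\usim\|_{L^2}^2+\tfrac12\|\mathbb{L}\usim\|_{L^2}^2$, and the $\mu\usim$ part of the test function supplies the positive $\mu\underline{c}\|\usim\|_{L^2}^2$ needed to absorb the first piece (and also the $L^2$ part of $\partial_t^\alpha\|\usim\|_{H^1}^2$, since only $d\ge 0$ is assumed in \eqref{ellipticityL}). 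With just $-\mathbb{L}\usim$ you would be left with an uncontrolled negative $\|\usim\|_{L^2}^2$ term.
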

\begin{proof}
See Section~\ref{sec:proof_decay_initialsmallness}.
\end{proof}

As a by-product, we also obtain existence of solutions to the forward problem.
\begin{corollary}\label{cor:wellposed_forward}
Assume that \eqref{bvp} has a solution $\uinf\in H^2(\Omega)$.
For any $T>0$, under the conditions of Theorem~\ref{thm:decay_usim} with $s=1$, 
a solution 
\begin{equation}\label{Ualpha}
u\in U^\alpha=
\begin{cases}
\{v\in C((0,T];H^2(\Omega)) \, : \partial_t^\alpha v\in C((0,T];L^2(\Omega))\}&\text{ if }\alpha<1\\
\{v\in C((0,T];H^2(\Omega)) \, : v_t\in C((0,T];H^1(\Omega))\}&\text{ if }\alpha=1
\end{cases}
\end{equation}
to \eqref{ibvp} exists and is unique.
\end{corollary}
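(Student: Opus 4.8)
The strategy is the standard one for semilinear evolution equations, made available ``for free'' by Theorem~\ref{thm:decay_usim}: combine a local-in-time existence result with the uniform a priori bound from that theorem to obtain a global solution, and derive uniqueness from an energy estimate for the difference of two solutions. Throughout we work with $\usim=u-\uinf$; since $\uinf\in H^2(\Omega)$ is time-independent, $\usim$ and $u$ have the same $U^\alpha$-regularity, and by \eqref{bndy_const}, \eqref{bvp} the function $\usim$ solves a fractional parabolic initial boundary value problem with homogeneous boundary conditions, initial value $\usim(0)=u_0-\uinf\in H^1(\Omega)$, and right-hand side
\[
F(\usim):=q\,\usim-p\,\bigl(f(\uinf+\usim)-f(\uinf)\bigr)+(r-\rinf),
\]
so it suffices to establish existence and uniqueness of $\usim\in U^\alpha$.

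First I would prove local existence on a short interval $[0,T_0]$ by a fixed point argument. For $w$ in a ball of $C([0,T_0];H^2(\Omega))$ around the initial value, let $\Phi(w)$ be the solution of the \emph{linear} problem $\partial_t^\alpha v-\mathbb{L}v=F(w)$ with homogeneous boundary data and $v(0)=u_0-\uinf$; existence, uniqueness and the bound $\|v\|_{U^\alpha}\lesssim\|F(w)\|_{C([0,T_0];L^2(\Omega))}+\|u_0-\uinf\|_{H^2(\Omega)}$ for this linear fractional problem are classical (maximal regularity, or the representation via $\mathbb{E}^\alpha$). The key nonlinear estimate is that $F$ maps bounded sets of $C([0,T_0];H^2(\Omega))$ into bounded sets of $C([0,T_0];L^2(\Omega))$ and is Lipschitz there: writing $f(\uinf+w_1)-f(\uinf+w_2)=\bigl(\int_0^1 f'(\uinf+\theta w_1+(1-\theta)w_2)\,d\theta\bigr)(w_1-w_2)$ and using the growth bound \eqref{growth} on $f''$ together with the summability \eqref{cond_r_kappa2_hi} of $p$ and the Sobolev embeddings available for $d\le3$ (in particular $H^2(\Omega)\hookrightarrow L^\infty(\Omega)$), one gets $\|p\,(f(\uinf+w_1)-f(\uinf+w_2))\|_{L^2(\Omega)}\lesssim C(\|w_1\|_{H^2(\Omega)},\|w_2\|_{H^2(\Omega)})\,\|w_1-w_2\|_{H^2(\Omega)}$, and analogously for the $q$-term; the small factor making $\Phi$ a contraction for $T_0$ small comes from the smoothing of the linear fractional solution operator over short time intervals. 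Banach's fixed point theorem then yields a unique local solution.

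To pass from local to global existence I would run a continuation argument: the local existence time $T_0$ can be chosen to depend only on $\|\usim(t_*)\|_{H^1(\Omega)}$ and on fixed data, while Theorem~\ref{thm:decay_usim} — via \eqref{bound_usim} and, for $T=\infty$, \eqref{decay_usim_Tauber} — bounds $\|\usim(t)\|_{H^1(\Omega)}$ uniformly in $t$ and in $T$, and, by inserting the (likewise bounded) term $\partial_t^\alpha u$ into the PDE and invoking elliptic regularity \eqref{ellipticregularityL}, also bounds $\|\usim(t)\|_{H^2(\Omega)}$ uniformly; hence the solution cannot leave $U^\alpha$ in finite time and extends to every $[0,T]$. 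For uniqueness, if $u_1,u_2\in U^\alpha$ solve \eqref{ibvp} with the same data, then $w:=u_1-u_2$ solves $\partial_t^\alpha w-\mathbb{L}w=q\,w-p\,(f(u_1)-f(u_2))$ with $w(0)=0$ and homogeneous boundary conditions; testing with $w$, using the coercivity \eqref{coercivityDC} and ellipticity \eqref{ellipticityL}, bounding $\|p\,(f(u_1)-f(u_2))\|_{L^2(\Omega)}\lesssim\|w\|_{H^1(\Omega)}$ (here $u_1,u_2\in C([0,T];H^2(\Omega))\hookrightarrow C([0,T];L^\infty(\Omega))$ keeps $f'$ bounded along the segment), and absorbing the gradient contribution, a fractional Gronwall inequality of the type developed for Theorem~\ref{thm:decay_usim} forces $w\equiv0$.

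The step I expect to be the main obstacle is the global continuation, specifically reconciling the norm controlled by the a priori estimate \eqref{bound_usim} — the time-weighted $k^{1-\alpha}*\|\usim\|_{H^{s+1}}^2$ together with a pointwise $H^1$-bound — with the norm that governs the local existence time, i.e.\ upgrading the weighted $H^2$-information to a genuine pointwise $H^2$-bound on $\usim(t)$ that is uniform in $t$, so that no finite-time blow-up is possible. A secondary technical point is that every boundedness and Lipschitz estimate on the nonlinearity must be carried out with $p\in L^\rrr(\Omega)$ and the growth condition \eqref{growth} only — no derivatives on $p$ — which is exactly the low-regularity regime \eqref{cond_r_kappa2_hi} the paper is built around.
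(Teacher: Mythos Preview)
The paper's own proof is a two-line sketch taking a different route: rather than a fixed-point/continuation scheme, it invokes the standard Galerkin machinery---finite-dimensional approximation, the energy estimates of Proposition~\ref{prop:enests} (specifically \eqref{enest_usim_hi}) at the approximate level, and passage to weak limits---pointing to \cite[Section~6.4.2]{Jin:2021} as a template while stressing that the estimates used avoid any differentiation of $p$ and $q$. Uniqueness is handled exactly as you propose, via the lower-order testing \eqref{enest0_usim}.

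The obstacle you flag in your last paragraph is real and is precisely what the Galerkin route sidesteps. Two concrete issues with your scheme. First, you set up the contraction in a ball of $C([0,T_0];H^2(\Omega))$ centred at the initial value and your linear bound carries $\|u_0-\uinf\|_{H^2(\Omega)}$; but the hypothesis is only $u_0\in H^1(\Omega)$ (cf.\ Theorem~\ref{thm:decay_usim}), and $U^\alpha$ demands $H^2$-regularity only on the \emph{open} interval $(0,T]$. To repair this you would have to rebuild the fixed point in a time-weighted space absorbing the singularity at $t=0$, after which smallness of $T_0$ no longer buys a contraction factor in the obvious way. Second, your continuation argument claims the local lifespan depends only on $\|\usim(t_*)\|_{H^1(\Omega)}$, yet the fixed point as written lives in $H^2$; and the a~priori information available---a pointwise $H^1$ bound plus a $k^{1-\alpha}$-weighted time average of $\|\usim\|_{H^2}^2$ from \eqref{bound_usim}, and only an asymptotic $\limsup$ statement \eqref{decay_usim_Tauber} when $T=\infty$---does not deliver a uniform pointwise $H^2$ bound at a generic restart time. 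The Galerkin approach avoids both mismatches: the energy identities are derived on smooth approximants, and one extracts the limit using exactly the bounds that \eqref{enest_usim_hi} actually provides, with no restart and no need to reconcile norms across a continuation.
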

\begin{proof}
To prove existence, one can follow the standard approch of Galerkin approximation, energy estimates  and weak limits (like in, e.g., \cite[Section 6.4.2]{Jin:2021}, but using the energy estimates in the proof of Theorem~\ref{thm:decay_usim}, that avoid differentiation of $p$ and $q$).
Uniqueness follows by the testing that we employ for obtaining lower order estimates, cf. \eqref{enest0_usim}. 
\end{proof}
\revision{
\begin{remark}\label{rem:LuchkoYamamoto2025}
For more comprehensive well-posedness result under more general assumptions, we refer to, e.g. \cite{GilbargTrudinger,LuchkoYamamoto:2025}.
While obtaining well-posedness is not the main task of our paper, we point to the fact that the regularity of solutions corresponds to the one stated in, e.g., \cite[Theorem 3.4]{LuchkoYamamoto:2025}, which might be viewed as a plausibility argument for our function space setting. 
\rerevisionown{More precisely, \cite[Example 3.3]{LuchkoYamamoto:2025} in case $p\in C^1(\Omega)$ provides a local in time well-posedness result for our setting.}
\\
Concerning decay, we point to \cite[Proposition 4.7.]{LuchkoYamamoto:2025}, where the better decay rate 
$\|u(t)-\uinf\|_{L^\infty(\Omega)}\lesssim C t^{-\alpha}$ (note the missing square on the norm, as compared to \eqref{decay_usim_Tauber}) has been proven in case $p(x)\equiv1$\footnote{taking into account the fact that $f'$ takes the opposite sign in the notation of \cite{LuchkoYamamoto:2025}} by means of comparison principles. 
\rerevision{The proof of this result relies on strict positivity of an eigenfunction $\phi_1$, which allows the authors to construct upper ond lower bound solutions as Mittag Leffler functions multiplied with $\phi_1$.
In our setting with the competing $q$ term, strict positivity of an eigenfunction is lost, though, and therefore we have to resort to different techniques for proving a decay result, namely, energy estimates that are obtained by an appropriate testing of the equation.
To remove the square on the norm also in our result, one might consider testing with $\usim \, |\usim|^{-1+\epsilon} \chi_{\{x\in\Omega\,:\,\usim(x)\not=0\}}$ rather than with $\usim$  
for $\epsilon\in(0,1]$, and study the limit $\epsilon\searrow0$. 
\footnote{More precisely, testing with $(\mu\usim-\mathbb{L}\usim) \, |\usim|^{-1+\epsilon} \chi_{\{x\in\Omega\,:\,\usim(x)\not=0\}}$ rather than with $\mu\usim-\mathbb{L}\usim$, see Section~\ref{sec:enest} below.} 
However, this would require a coercivity estimate of the form 
\begin{equation*}
\frac{v(t)}{|v(t)|^{1-\epsilon}}\,(\partial_t^\alpha v)(t)\geq c\partial_t^\alpha [|v|^{1+\epsilon}](t),
\qquad v\in W^{1,1}(0,T).
\end{equation*}
with some $c>0$ in place of \eqref{coercivityDC}, which clearly holds in case $\alpha=1$, but does not seem to be available (yet) for $\alpha<1$.
}
\MMargin{RefY (2)}
\end{remark}
}
\Margin{RevX 6.}
\Margin{RevY (4)}
Theorem~\ref{thm:decay_usim} already contains decay rates of $\partial_t^\alpha u$ in a low order spatial norm. 
Estimates in higher order norms can be established under further assumptions.
\begin{theorem}\label{prop:decay_usim_t} 
Let the conditions of Theorem~\ref{thm:decay_usim} and additionally 
\begin{equation}\label{utinit}
u_t(0)\in L^2(\Omega), \quad u_t(t_0)\in H^1(\Omega) \ \text{ for some }t_0\geq0
\end{equation}
be satisfied.
\begin{itemize}
\item[(a)] If $T=\infty$, $\alpha=1$ and 
$r_t\in L^2(0,\infty;L^2(\Omega))$, with $\|r_t(t)\|_{L^2(\Omega)}\to0$ as $t\to\infty$,
then  $\|u_t(t)\|_{H^1(\Omega)}\to0$ as $t\to\infty$.
\item[(b)] If $T=\infty$, $\alpha\in(0,1]$ and $\|r_t(t)\|_{L^2(\Omega)}^2=O(t^{-\beta})$ then there exists $C>0$ independent of $t_0$, such that
\[
\|u_t(t)\|_{H^2(\Omega)}^2\lesssim C (t-t_0)^{-\min\{\alpha,\beta\}}.
\]
\item[(c)] If 
$\|r_t(t)\|_{L^2(\Omega)}^2\leq C_r 
\decayfunc(t)$
for some $\omega:=
\omega_r
>0$, 
then there exist $C,\,\omega>0$ independent of $T$, $t_0$, such that 
\begin{equation}\label{expdecay_u_t}
\|u_t(t)\|_{H^2(\Omega)}^2\leq C 
\decayfunc(t-t_0)
\quad t\in(t_0,T)
\end{equation} 
\end{itemize}
\end{theorem}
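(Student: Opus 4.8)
The plan is to derive the equations satisfied by $w:=u_t$ by formally differentiating \eqref{ibvp} in time. For $\alpha=1$ this gives $\partial_t w-\mathbb{L}w=(q-pf'(u))w+r_t$, while for $\alpha\in(0,1)$ one obtains $\partial_t^\alpha w-\mathbb{L}w=(q-pf'(u))w+r_t$ with the subtlety that the initial trace of $k^{1-\alpha}*w$ has to be handled (here $\|u_t(0)\|_{L^2(\Omega)}<\infty$ is used, together with the fact that the equation itself provides the needed regularization for $t>0$, so that we may shift the time origin to $t_0$). In all cases $w$ solves a \emph{linear} subdiffusion/parabolic equation with time-dependent potential $\pact(t,x):=p(x)f'(u(t,x))-q(x)$ and source $r_t$. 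The key point, to be extracted from the proof of Theorem~\ref{thm:decay_usim}, is that $u(t)\to\uinf$ in $H^2(\Omega)\hookrightarrow C(\overline\Omega)$ (for $d\le3$), hence by \eqref{pinfty} and continuity of $f'$ we have $\pact(t,x)\ge\underline{c}/2>0$ for $t$ large, say $t\ge t_0$ (enlarging $t_0$ if necessary); for the finitely many earlier times one only needs $\pact$ to be bounded below, which boundedness of $u$ on $[0,t_0]$ (from Corollary~\ref{cor:wellposed_forward}) provides up to absorbing a constant.

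The core of the argument is then an energy estimate for the linear equation for $w$ on $(t_0,T)$, carried out at two regularity levels. Testing with $w$ and using the coercivity estimate \eqref{coercivityDC} together with the ellipticity \eqref{ellipticityL} yields
\begin{equation*}
\tfrac12\partial_t^\alpha\|w\|_{L^2(\Omega)}^2+\underline{c}_D\|\nabla w\|_{L^2(\Omega)}^2+\tfrac{\underline{c}}{2}\|w\|_{L^2(\Omega)}^2
\le \langle r_t,w\rangle +(\text{lower-order terms from }\pact\text{ on }[t_0,t_0]),
\end{equation*}
and after a Young inequality on the right this is closed in the form $\partial_t^\alpha\|w\|_{L^2}^2+c_1\|w\|_{H^1}^2\le c_2\|r_t\|_{L^2}^2$ (for $X=H^{-1}$ one pairs against $w$ in the $H^{-1}$--$H^1$ duality; note no derivative of $p,q$ is needed, consistent with the low-regularity philosophy of the paper). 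For the higher-order bound, test with $\mathbb{L}w$ (equivalently, apply $\mathbb{L}$ and test with $\mathbb{L}w$, using $Bw=0$): using \eqref{coercivityDC} on $\mathbb{L}w$, the elliptic regularity \eqref{ellipticregularityL}, and the Sobolev embedding $H^1(\Omega)\hookrightarrow L^p(\Omega)$ (here $d\le3$ enters) to estimate $\|\pact w\|_{L^2}\le\|\pact\|_{L^\infty}\|w\|_{L^2}$ (boundedness of $\pact$ following from $u\in L^\infty(0,\infty;H^2)\hookrightarrow L^\infty$ and $p\in L^\rrr$, $q\in L^\sss$ combined with the growth bound \eqref{growth} — this is where \eqref{cond_r_kappa2_hi} is invoked), one arrives at
\begin{equation*}
\partial_t^\alpha\|w\|_{H^1(\Omega)}^2+c_3\|w\|_{H^2(\Omega)}^2\le c_4\|r_t\|_{L^2(\Omega)}^2+c_5\|w\|_{L^2(\Omega)}^2,
\end{equation*}
where the last term is controlled by the previous estimate.

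Finally the differential inequality is converted into the decay rate. For $\alpha=1$, part~(a) is a routine argument: from $\partial_t\|w\|_{H^1}^2+c\|w\|_{H^1}^2\le c\|r_t\|_{L^2}^2$ with $r_t\in L^2(0,\infty;L^2)$ and $\|r_t(t)\|_{L^2}\to0$, an $L^1$-integrability/Barbalat-type argument gives $\|w(t)\|_{H^1}\to0$. For the quantitative parts, I would apply the Gronwall-type / Tauberian machinery already developed for Theorem~\ref{thm:decay_usim}: part~(b) feeds $\|r_t\|_{L^2}^2=O(t^{-\beta})$ into the fractional Gronwall lemma to get decay at the rate $(t-t_0)^{-\min\{\alpha,\beta\}}$ (the $\alpha$ coming from the intrinsic algebraic decay of subdiffusion, the $\beta$ from the forcing; for $\alpha=1$ the first term is replaced by the exponential so only $\beta$ survives), while part~(c), with $\|r_t\|_{L^2}^2\le C_r\decayfunc$, gives $\|w(t)\|_{H^2}^2\le C\decayfunc(t-t_0)$ by the same lemma in its exponential ($\alpha=1$) or power-law ($\alpha<1$) form; the bounds are independent of $T$ and $t_0$ because the constants $c_i$ depend only on the fixed data $\underline c,\underline c_D,C_{\text{ell}}^\Omega$ and on $\sup_{t}\|u(t)\|_{H^2}$, which is uniform by Theorem~\ref{thm:decay_usim}.

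The main obstacle I anticipate is \textbf{justifying the equation for $w=u_t$ and its regularity at the level needed to test with $\mathbb{L}w$} — in particular, making rigorous the differentiation in time of the fractional equation and the treatment of the initial layer (the $k^{1-\alpha}*w$ trace), as well as ensuring $\pact$ is genuinely bounded in the low-regularity coefficient setting \eqref{LrrrLsss}. A standard remedy is to work with difference quotients in time (or a Galerkin scheme for the $w$-equation) to obtain the estimates first for regularized $w$ and then pass to the limit, and to absorb the initial-layer term by exploiting the smoothing of the equation to guarantee $u_t(t_0)\in H^1(\Omega)$ for a.e.\ $t_0$ (which is exactly what \eqref{utinit} postulates), so that the energy estimate is run on $(t_0,T)$ with the clean initial datum $u_t(t_0)$. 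Everything else is bookkeeping with Young's inequality, Sobolev embeddings for $d\le3$, and the fractional Gronwall lemmas already in hand.
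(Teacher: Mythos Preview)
Your overall plan---differentiate in time, derive a linear equation for $w=u_t$, run energy estimates, then feed the resulting differential inequality into the Gronwall/Tauberian lemmas---is exactly the paper's. The gap is in how you linearize. You put the full time-dependent potential $\pact(t,x)=p(x)f'(u(t,x))-q(x)$ on the left and assert (i) $\pact(t,\cdot)\ge\underline{c}/2$ pointwise for $t$ large, and (ii) $\|\pact\|_{L^\infty}<\infty$. Both fail in the low-regularity setting \eqref{LrrrLsss} that the paper insists on: since $\pact-\pinf=p\,(f'(u)-f'(\uinf))$, uniform smallness of $f'(u)-f'(\uinf)$ does \emph{not} make $p\,(f'(u)-f'(\uinf))$ small pointwise where $p$ is unbounded, so $\pact$ can be negative on sets of positive measure and is certainly not in $L^\infty$ unless $p$ is. Your estimate $\|\pact w\|_{L^2}\le\|\pact\|_{L^\infty}\|w\|_{L^2}$ therefore cannot be used.

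The paper avoids this by keeping the time-independent potential $\pinf$ (which is $\ge\underline{c}$ by \eqref{pinfty}) on the left and pushing the remainder $p\,\bar{\bar{f}}\,\usim_t$, with $\bar{\bar{f}}=f'(u)-f'(\uinf)=\int_0^1 f''(\uinf+\theta\usim)\,d\theta\,\usim$, to the right. After testing with $\mu\usim_t-\mathbb{L}\usim_t$, the same H\"older/Sobolev combination as in \eqref{estgrowth_m2} gives
\[
\|p\,\bar{\bar{f}}\,\usim_t\|_{L^2(\Omega)}\le \|p\|_{L^\rrr(\Omega)}\bigl(\Phi_\infty^1(\|\uinf\|_{H^1})+\Phi_\sim^1(\|\usim(t)\|_{H^1})\bigr)\,\|\usim(t)\|_{H^1(\Omega)}\,\|\usim_t(t)\|_{H^2(\Omega)},
\]
and the extra factor $\|\usim(t)\|_{H^1}$ is the point: it is uniformly small by the barrier argument already carried out in the proof of Theorem~\ref{thm:decay_usim}, so the whole term is absorbed into $\|\usim_t\|_{H^2}^2$ on the left with no $L^\infty$ control on $p$, $q$, or any potential. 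This closes the inequality $\partial_t^\alpha\|\usim_t\|_{H^1}^2+c_0\|\usim_t\|_{H^1}^2+c_1\|\usim_t\|_{H^2}^2\le \tilde C(\|r_t\|_{L^2}^2+\chi_{\alpha<1}k^\alpha(t)^2\|u_t(0)\|_{L^2}^2)$, after which (a)--(c) follow from Lemmas~\ref{lem:eta} and~\ref{lem:eta01}. Note also that for $\alpha<1$, commuting $\partial_t$ past $\partial_t^\alpha$ produces the explicit extra source $-k^\alpha(t)u_t(0)$; this is precisely where the hypothesis $u_t(0)\in L^2(\Omega)$ enters, and since $k^\alpha(t)^2\sim t^{-2\alpha}$ it does not spoil the rate.
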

\begin{proof}
See Section~\ref{sec:proof_decay_initialsmallness}.
\end{proof}
Note that according to the assumptions of Theorem~\ref{thm:decay_usim}, only smallness of the initial data $\|u(0)-\uinf\|_{H^1(\Omega)}<\rho_0$ for $\usim$ in the lower order norm is needed, but not smallness of the initial data for its time derivative $\partial_t^\alpha\usim=\partial_t^\alpha u$.
\\
We also point to the fact that in case $\alpha<1$, finiteness of $\|u_t(0)\|_{L^2(\Omega)}$ cannot be bootstrapped from the PDE. Thus we impose $u_t(0)\in L^2(\Omega)$ as an a priori regularity assumption here.
This a priori regularity condition on $u_t(0)$ 
\revision{cannot be concluded from regularity of the data alone. In particular,  not even in the homogeneous linear case $p=q=r=0$, it necessarily holds with general initial data $u_0\in H^2(\Omega)$, 
(unlike the case $\alpha=1$, where we immediately conclude 
$u_t(0)= \mathbb{L} u_0 - qu_0 + f(u_0)+r(0)\in L^2(\Omega)$).
This is due to the singularity of the Mittag-Leffler function $E_{\alpha,\alpha}$ at $t=0$.
For the same reason, no continuity at $t=0$ can be concluded in the $L^2(\Omega)$ norm and one has to resort to a weaker topology in space to conclude continuity of $u_t$ at $t=0$: From the PDE that $u_t$ satisfies (see  \eqref{timediffPDE} below with $\text{rhs}\in L^2(0,T;H^{-1}(\Omega))$), by an application of, e.g., \cite[Theorem 4.1]{KubicaRyszewskaYamamoto:2020} and with continuity of the embedding $H^\alpha(0,T;H^{-1}(\Omega))\to C([0,T];H^{-1}(\Omega))$ for $\alpha>\frac12$ we obtain $u_t(t)\to u_t(0)$ as $t\searrow0$ in $H^{-1}(\Omega)$.
\\ 
On the other hand, $u_t(0)\in L^2(\Omega)$ can still be satisfied under a certain interplay of source term and coefficients, as the simple example 
$r=\partial_t^\alpha \phi-\mathbb{L}\phi-q\phi+pf(\phi)$ 
for some $\phi\in C^1([0,T];H^2(\Omega))$ shows.
}
\Margin{RevX 7.}

\medskip

Decay estimates without imposing any closeness condition on the initial data to the steady state can be obtained by making an assumption of the type \eqref{pinfty} at the solution itself (rather than at the equilibrium state)
\begin{equation*}
p(x)f'(u(t,x))-q(x)\geq \underline{c}>0 \quad \text{ for a.e. }x\in\Omega, \ t\in(0,T),
\end{equation*}
or at all states between the transient and the equilibrium one 
\begin{equation}\label{boundpqf_int}
p(x)f'(\theta u(t,x)+(1-\theta)\uinf(x))-q(x)\geq \underline{c}>0 \quad \text{ for a.e. }x\in\Omega, \ t\in(0,T).
\end{equation}
This is clearly satisfied if $f$ is strictly monotonically increasing with derivative bounded away from zero and if $q$ is small enough as compared to $p$.
We only state the high regularity case $s=1$ corresponding to Theorem~\ref{thm:decay_usim} (hi) here; clearly a version of (lo) would be possible as well, but remain constrained by the restrictive condition \eqref{cond_r_kappa2_lo}.
\begin{theorem}\label{thm:decay_usim_nosmallness} 
Let $s=1$, $X=L^2(\Omega)$.
\begin{itemize}
\item[(i)] 
Under the assumptions of Theorem~\ref{thm:decay_usim} with \eqref{boundpqf_int} in place of \eqref{pinfty}, the assertions  of Theorem~\ref{thm:decay_usim} remain valid with $\rho_0=\infty$. 
\item[(ii)] If additionally \eqref{utinit} holds, then the assertions of Theorem~\ref{prop:decay_usim_t} hold.
\end{itemize}
\end{theorem}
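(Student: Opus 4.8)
The plan is to notice that the smallness hypothesis \eqref{smallness_init} in Theorem~\ref{thm:decay_usim} enters its proof only in order to absorb the superlinear Taylor remainder of $f$ at $\uinf$ into the coercive potential $\pinf$, and that condition \eqref{boundpqf_int} removes this remainder entirely. Using the mean value identity
\[
f(u(t,x))-f(\uinf(x))=\Bigl(\int_0^1 f'\bigl(\theta u(t,x)+(1-\theta)\uinf(x)\bigr)\,d\theta\Bigr)\,\usim(t,x),
\]
subtracting \eqref{bvp} from \eqref{ibvp} shows that $\usim=u-\uinf$ solves the problem, \emph{linear} in $\usim$,
\[
\partial_t^\alpha\usim-\mathbb{L}\usim+\peff\,\usim=r-\rinf \ \text{ in }(0,T)\times\Omega,\qquad B\usim=0,\qquad \usim(0)=u^0-\uinf,
\]
with the time- and space-dependent potential
\[
\peff(t,x):=\int_0^1\bigl[p(x)f'\bigl(\theta u(t,x)+(1-\theta)\uinf(x)\bigr)-q(x)\bigr]\,d\theta,
\]
which by \eqref{boundpqf_int} satisfies $\peff(t,x)\geq\underline{c}>0$ for a.e. $x\in\Omega$ and all $t\in(0,T)$ --- precisely the role played by $\pinf$ in \eqref{pinfty}. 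I would then rerun the energy estimates of Proposition~\ref{prop:enests} and of the proof of Theorem~\ref{thm:decay_usim} verbatim with $\pinf$ replaced by $\peff$ --- testing with $\usim$ for $s=0$ and with $-\mathbb{L}\usim$ for $s=1$, invoking \eqref{coercivityDC}, \eqref{ellipticityL}, \eqref{ellipticregularityL} --- now with \emph{no} remainder on the right-hand side and hence \emph{no} constraint on $\|\usim(0)\|_{H^1(\Omega)}$, so that $\rho_0=\infty$. The fractional-Gronwall estimate leading to \eqref{bound_usim}, \eqref{decay_usim} and the Tauberian argument for \eqref{decay_usim_Tauber} then go through unchanged, as they used only coercivity, the uniform lower bound on the potential, and the decay \eqref{decay_r} of $r-\rinf$.

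The step needing care --- and the main obstacle --- is the higher-order estimate ($s=1$, $X=L^2(\Omega)$, which is the setting of Theorem~\ref{thm:decay_usim_nosmallness}): bounding $\|\peff\,\usim\|_{L^2(\Omega)}$ and, after spatial differentiation, the terms carrying $p\,f''$ evaluated at the intermediate states brings in $\|u\|_{L^\infty(\Omega)}$ through the growth bound \eqref{growth}, so one must check that the resulting a priori estimate does not become circular. This is exactly where conditions \eqref{growth}, \eqref{cond_r_kappa2_hi} are used: the exponents $\kappa_2$ and $\rrr$ are tuned so that, via the Sobolev embeddings available for $d\in\{1,2,3\}$, these contributions are controlled by the uniform $H^1(\Omega)$-bound on $\usim(t)$ already supplied by the $s=0$ estimate together with $\uinf\in H^2(\Omega)$; feeding this back into the $s=1$ estimate closes it. This is the same bookkeeping as in the proof of Theorem~\ref{thm:decay_usim}, the only difference being that it now closes for arbitrary initial data, there being no quadratic-in-$\usim$ term to require smallness.

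For part (ii) I would argue in the same way on the time-differentiated problem. Whenever \eqref{utinit} grants enough regularity, $w=u_t$ solves, for $\alpha\in(0,1]$,
\[
\partial_t^\alpha w-\mathbb{L}w+\bigl(p\,f'(u)-q\bigr)w=r_t+g,\qquad Bw=0,
\]
with $w(t_0)\in H^1(\Omega)$ available as a starting datum, where $g\equiv0$ if $\alpha=1$ and $g$ is a forcing decaying at least like $t^{-\alpha}$ (arising, for $\alpha<1$, from differentiating the Caputo derivative). Taking $\theta=1$ in \eqref{boundpqf_int} gives $p(x)f'(u(t,x))-q(x)\geq\underline{c}>0$ for a.e.\ $x$ and all $t$, so the potential in the $w$-equation is coercive \emph{directly}, without the detour through $\pinf$ and the smallness of $\usim$ on which the proof of Theorem~\ref{prop:decay_usim_t} relied. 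Hence the energy estimates and the Gronwall/Tauberian arguments of that proof carry over with $\rho_0=\infty$: the a priori $H^2(\Omega)$-bound on $u$ and the decay of $\usim$ established in part (i) control the lower-order terms (those carrying $p\,f''(u)\nabla u$ after differentiating in space), and the assumptions on $r_t$ in cases (a)--(c) yield $\|u_t(t)\|_{H^1(\Omega)}\to0$, the algebraic rate $(t-t_0)^{-\min\{\alpha,\beta\}}$, respectively the rate $\decayfunc(t-t_0)$, exactly as there.
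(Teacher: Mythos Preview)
Your overall strategy---rewriting the $\usim$-equation via the mean value identity so that the effective potential $\peff$ absorbs the nonlinearity and is uniformly bounded below by $\underline{c}$---is exactly the paper's starting point. But the claim that one can then ``rerun the energy estimates of Proposition~\ref{prop:enests} \emph{verbatim} with $\pinf$ replaced by $\peff$'' breaks down at the $s=1$ level, and this is precisely where the smallness condition would otherwise re-enter. In the original $s=1$ estimate \eqref{enid_usim} the cross term $\int_\Omega \pinf\,\usim\,\mathbb{L}\usim\,dx$ is controlled using $\|\pinf\|_{L^\infty(\Omega)}$. Here, however, $\peff=p\bar f - q$ with $p\in L^\rrr$, $q\in L^\sss$ only, so $\peff\notin L^\infty(\Omega)$ and the verbatim replacement fails. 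Your acknowledgement of ``the step needing care'' and your proposed remedy---bootstrapping a uniform-in-time $H^1$ bound on $\usim(t)$ from the $s=0$ estimate---does not close: testing with $\usim$ yields $\partial_t^\alpha\|\usim\|_{L^2}^2+c\|\usim\|_{H^1}^2\leq\tilde C\|r-\rinf\|_{H^{-1}}^2$, which controls $\|\usim(t)\|_{L^2}$ and the time-convolved quantity $(k^{1-\alpha}*\|\usim\|_{H^1}^2)(t)$, but not $\|\usim(t)\|_{H^1}$ pointwise (and Lemma~\ref{lem:eta01} gives only asymptotic decay). So the $D(t)$-type coefficient in the $s=1$ estimate is not a priori controlled.

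The paper's actual mechanism is different: one tests with $(\mu-\mathbb{L})\usim$ and estimates $\|\peff\usim\|_{L^2}$ via H\"older as in \eqref{est_peff}, placing $\|\usim\|_{L^\infty}\leq C\|\usim\|_{H^2}$ on the absorbable side. This produces the estimate \eqref{enest_usim_int} with a coefficient $D(t)$ in front of $\|\usim\|_{H^2}^2$ that depends only on $\|\usim(t)\|_{H^1}$. Since $\mu$ is free, one now chooses it large enough that $D(0)<\tfrac12\mu\min\{\underline{c}/2,c_D\}$ and runs a barrier argument to propagate this in time. Thus a barrier step is still needed; what changes compared to Theorem~\ref{thm:decay_usim} is that the free multiplier $\mu$ replaces smallness of $\|\usim(0)\|_{H^1}$. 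The same device is used for part (ii): the commutator $p(\partial_t(\bar f\,\usim)-\bar f\,\usim_t)$ is estimated exactly as in \eqref{estgrowth_m2_alpha1}, and the decay of $\|\usim(t)\|_{H^1}$ from part (i) makes the coefficient small for $t\geq t_1$. (Your phrase ``after differentiating in space'' should be avoided---spatially differentiating $p$ or $q$ is not permitted under \eqref{LrrrLsss}; all higher-order estimates proceed by testing with $-\mathbb{L}\usim$, not by taking $\nabla$.)
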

\begin{proof}
See Section~\ref{sec:proof_decay_noinitialsmallness}.
\end{proof}

\begin{remark}\label{rem:decaydtalphau}
Decay of $\partial_t^\alpha$ in case $\alpha\in(\frac23,1)$ under the assumptions of Theorem~\ref{prop:decay_usim_t} 
\rerevisionown{(c)} 
(or its counterpart in Theorem~\ref{thm:decay_usim_nosmallness} (ii)) with $t_0=0$ can be derived by the simple estimate
\[
\|\partial_t^\alpha u(t)\|_{H^2(\Omega)}
\leq \int_0^t \kernel^\alpha(t-s)\|u_t(s)\|_{H^2(\Omega)}\, ds
\leq \frac{C}{\Gamma(1-\alpha)}\, t^{1-\alpha-\alpha/2}\int_0^1 (1-\sigma)^{-\alpha}\sigma^{\alpha/2}\, d\sigma.
\]
An improved decay estimate as in \cite[Section 4.6]{KubicaRyszewskaYamamoto:2020} would extend the range of possible $\alpha$ values to $\alpha\in(\frac12,1)$
\end{remark}

\begin{remark}
Under weaker assumptions on $f$ one can easily obtain just boundedness by a $T$ dependent constant. Imposing in place of \eqref{pinfty}
\[
p(x)\geq0\quad x\in\Omega, \quad q\in L^\infty(\Omega), \quad f(\xi)\xi\geq0\quad \xi\in\mathbb{R},
\]
we directly test \eqref{ibvp} with $u$ to obtain, using \eqref{ellipticityL}, \eqref{coercivityDC},
\[
\begin{aligned}
&\frac12\partial_t^\alpha\|u(t)\|_{L^2(\Omega)}^2
+\underline{c}_D\, \|\nabla u(t)\|_{L^2(\Omega)}^2
\\
&\leq \|q\|_{L^\infty(\Omega)} \|u(t)\|_{L^2(\Omega)}^2
+\int_\Omega r(t)\,u(t)\, dx,
\end{aligned}
\]
hence 
\[
\begin{aligned}
&\partial_t^\alpha\|u(t)\|_{L^2(\Omega)}^2
+\underline{c}_D\, \|u(t)\|_{H^1(\Omega)}^2
\\
&\leq 2(\|q\|_{L^\infty(\Omega)} +\underline{c}_D) \|u(t)\|_{L^2(\Omega)}^2
+\frac{1}{\underline{c}_D} \, \|r(t)\|_{H^{-1}(\Omega)}^2,
\end{aligned}
\]
and apply Gronwall's inequality (see, e.g., \cite[Theorem 4.2]{Jin:2021}, \cite[Lemma A.1.]{BBB}) to obtain 
\[
\|u\|_{L^\infty(0,T;L^2(\Omega))}^2+\|u\|_{L^2(0,T;H^1(\Omega))}^2
\leq C_T(\|u^0\|_{L^2(\Omega)}^2+\|r\|_{L^2(0,T;H^{-1}(\Omega))}^2)
\]
\end{remark}

\section{Proof of Theorems~\ref{thm:decay_usim} and \ref{prop:decay_usim_t}}\label{sec:proof_decay_initialsmallness}
To exploit conditon \eqref{pinfty}, we write the difference \eqref{util} of solutions \eqref{ibvp}  and \eqref{bvp} with \eqref{bndy_const} as a solution to the initial boundary value problem
\begin{equation}\label{ibvp_usim}
\begin{aligned}
\partial_t^\alpha u^\sim+(-\mathbb{L}+\pinf)\usim&=
-p\,\frac12\int_0^1f''(\uinf+\theta\usim)\,d\theta\,(\usim)^2+r-\rinf \text{ in }(0,T)\times\Omega, \\ 
B \usim&=0 \text{ on }(0,T)\times\partial\Omega, \\
\usim&=u^0-\uinf\text{ in }\{0\}\times\Omega.
\end{aligned}
\end{equation}
where we have used 
\[
f(\uinf+\usim)-f(\uinf)-f'(\uinf)\usim=
\frac12\int_0^1f''(\uinf+\theta\usim)\,d\theta\,(\usim)^2.
\]
\subsection{Energy estimates}\label{sec:enest}
To obtain energy estimates on $u^\sim$ we multiply the PDE in \eqref{ibvp_usim} with 
$\usim$ and integrate over $\Omega$ to obtain, using \eqref{coercivityDC},
that
\[
\begin{aligned}
&\frac12\partial_t^\alpha\|\usim(t)\|_{L^2(\Omega)}^2
+\underline{c}_D\, \|\nabla\usim(t)\|_{L^2(\Omega)}^2
+\underline{c}\, \|\usim(t)\|_{L^2(\Omega)}^2
\\
&\leq\int_\Omega\Bigl(-p\,\frac12\int_0^1f''(\uinf+\theta\usim(t))\,d\theta\,(\usim(t))^2+r(t)-\rinf
\Bigr)\,\usim(t)\, dx,
\end{aligned}
\]
thus, using Young's inequality and multiplying by two
\begin{equation}\label{enest0_usim}
\begin{aligned}
&\partial_t^\alpha\|\usim(t)\|_{L^2(\Omega)}^2
+\min\{\underline{c},\,\underline{c}_D\}\, \|\usim(t)\|_{H^1(\Omega)}^2
\\
&\leq 
\frac{1}{\min\{\underline{c},\,\underline{c}_D\}}\,
\|p\,\frac12\int_0^1f''(\uinf+\theta\usim(t))\,d\theta\,(\usim(t))^2-(r(t)-\rinf)\|_{H^{-1}(\Omega)}^2.
\end{aligned}
\end{equation}
A higher regularity estimate can be derived by testing with $\mu\usim-\mathbb{L}\usim$ for some $\mu>0$,  
which via Young's inequality yields 
\begin{equation}\label{enid_usim}
\begin{aligned}
&\frac12\mu\partial_t^\alpha\|\usim(t)\|_{L^2(\Omega)}^2
+\frac12\underline{c}_D\partial_t^\alpha\|\nabla\usim(t)\|_{L^2(\Omega)}^2
\\&\qquad 
+(\mu\underline{c}-\frac12\|\pinf\|_{L^\infty(\Omega)}^2)\, \|\usim(t)\|_{L^2(\Omega)}^2
+\frac12\|\mathbb{L}\usim(t)\|_{L^2(\Omega)}^2
\\
&\leq\int_\Omega\Bigl(-p\,\frac12\int_0^1f''(\uinf+\theta\usim(t))\,d\theta\,(\usim(t))^2+r(t)-\rinf
\Bigr)\,(\mu\usim-\mathbb{L}\usim)\, dx\\
&\leq\left(\frac{\mu}{2\underline{c}}+1\right)
\|p\,\frac12\int_0^1f''(\uinf+\theta\usim(t))\,d\theta\,(\usim(t))^2-(r(t)-\rinf)\|_{L^2(\Omega)}^2
\\&\qquad 
+\frac{\mu\underline{c}}{2} \|\usim(t)\|_{L^2(\Omega)}^2
+\frac14\|\mathbb{L}\usim(t)\|_{L^2(\Omega)}^2.
\end{aligned}
\end{equation}
Note that we do \textit{not} integrate the term contaning $\pinf$
 by parts, since this would lead to a gradient of $\pinf$, but estimate it by 
\[\begin{aligned}
\left|\int_\Omega\pinf\usim(t)\, \mathbb{L}\usim(t)\, dx\right|
&\leq \|\pinf\|_{L^\infty(\Omega)}\, \|\usim(t)\|_{L^2(\Omega)}\,\|\mathbb{L}\usim(t)\|_{L^2(\Omega)}\\
&\leq  \frac12\|\pinf\|_{L^\infty(\Omega)}^2\, \|\usim(t)\|_{L^2(\Omega)}^2+\frac12\|\mathbb{L}\usim(t)\|_{L^2(\Omega)}^2.
\end{aligned}\]
Thus, using \eqref{ellipticregularityL}, and multiplying by two we get 
\[
\begin{aligned}
&\min\{\mu,\, \underline{c}_D\} \partial_t^\alpha\|\usim(t)\|_{H^1(\Omega)}^2
+\frac{\min\{\mu\,\underline{c}-\|\pinf\|_{L^\infty(\Omega)}^2,\,\frac12\}}{(C_{\text{ell}}^\Omega)^2}\, 
\|\usim(t)\|_{H^2(\Omega)}^2
\\
&\leq 
\left(\frac{\mu}{\underline{c}}+2\right)\,
\|p\,\frac12\int_0^1f''(\uinf+\theta\usim(t))\,d\theta\,(\usim(t))^2-(r(t)-\rinf)\|_{L^2(\Omega)}^2
\end{aligned}
\]
where we choose, e.g. $\mu=\frac{1}{2\underline{c}}(1+2\|\pinf\|_{L^\infty(\Omega)}^2)$ to obtain
\begin{equation}\label{enest_usim}
\begin{aligned}
&\min\left\{\frac{1+2\|\pinf\|_{L^\infty(\Omega)}^2}{2\underline{c}},\, \underline{c}_D\right\} \partial_t^\alpha\|\usim(t)\|_{H^1(\Omega)}^2
+\frac{1}{2(C_{\text{ell}}^\Omega)^2}\, \|\usim(t)\|_{H^2(\Omega)}^2
\\
&\leq 
\left(\frac{1+2\|\pinf\|_{L^\infty(\Omega)}^2}{2\underline{c}^2}+2\right)\,
\|p\,\frac12\int_0^1f''(\uinf+\theta\usim(t))\,d\theta\,(\usim(t))^2-(r(t)-\rinf)\|_{L^2(\Omega)}^2.
\end{aligned}
\end{equation}
  
The right hand sides in \eqref{enest0_usim}, \eqref{enest_usim} can be further estimated by imposing the growth condition \eqref{growth} on $f$.
Relying on the continuous embeddings and duality 
\begin{equation}\label{embeddings}
\begin{aligned}
&H^1(\Omega)\to L^\ell(\Omega) \text{  and 
}L^m(\Omega)\to H^{-1}(\Omega)  \text{ for } 
\begin{cases} 
\ell\leq\infty, \ m\geq1 &\text{ if }d=1\\
\ell<\infty, \ m>1 &\text{ if }d=2\\
\ell\leq6, \ m\geq\frac65 &\text{ if }d=3
\end{cases}
\\
&H^2(\Omega)\to L^\infty(\Omega), 
\end{aligned}
\end{equation}
we estimate, in the low regularity scenario \eqref{enest0_usim} 
\[\begin{aligned}
&\|p\int_0^1f''(\uinf+\theta\usim(t))\,d\theta\,(\usim(t))^2\|_{L^m(\Omega)}\\
&\leq 
\|p\|_{L^{\rrr}(\Omega)}
\|\int_0^1f''(\uinf+\theta\usim(t))\,d\theta\,\usim(t)\|_{L^\nu(\Omega)}
\|\usim(t)\|_{L^\ell(\Omega)}
\\
&\leq 
\|p\|_{L^{\rrr}(\Omega)}\Bigl(
c_2 \|\usim(t)\|_{L^\nu(\Omega)}
+ C_2\,C_{\kappa_2}\,\bigl(\|\,|\uinf|^{\kappa_2+1}\|_{L^\nu(\Omega)}+\|\,|\usim(t)|^{\kappa_2+1}\|_{L^\nu(\Omega)}\bigr)\Bigr)
\|\usim(t)\|_{L^\ell(\Omega)}
\\
&= 
\|p\|_{L^{\rrr}(\Omega)}\Bigl(
c_2 \|\usim(t)\|_{L^\nu(\Omega)}
+ C_2\,C_{\kappa_2}\,\bigl(\|\uinf\|_{L^{(\kappa_2+1)\nu}(\Omega)}^{\kappa_2+1}+\|\usim(t)\|_{L^{(\kappa_2+1)\nu}(\Omega)}^{\kappa_2+1}\bigr)\Bigr)
\|\usim(t)\|_{L^\ell(\Omega)}\\
&\leq \|p\|_{L^{\rrr}(\Omega)}\bigl(\Phi_\infty^0(\|\uinf\|_{L^2(\Omega)})+\Phi_\sim^0(\|\usim(t)\|_{L^2(\Omega)})\bigr)
\|\usim(t)\|_{L^2(\Omega)}\|\usim(t)\|_{H^1(\Omega)}
\end{aligned}\]
with
\begin{equation}\label{Phi0Phisim_0}
\begin{aligned}
&\Phi_\sim^0(\xi)=C_2\,C_{\kappa_2}\,(C_{L^2\to L^{(\kappa_2+1)\nu}}^\Omega)^{\kappa_2+1}\,C_{H^1\to L^\ell}^\Omega\,\xi^{\kappa_2}\\
&\Phi_\infty^0(\xi)=\Phi_\sim^0(\xi)+c_2\,C_{L^2\to L^\nu}^\Omega\,C_{H^1\to L^\ell}^\Omega,
\end{aligned}
\end{equation}
where we have used the generalized H\"older inequality with
\begin{equation}\label{rlnum}
\frac{1}{\rrr}+\frac{1}{\ell}+\frac{1}{\nu}=\frac{1}{m}
\end{equation}
as well as
\[
(a+b)^{\kappa_2}\leq C_{\kappa_2}(a^{\kappa_2}+b^{\kappa_2}), \quad a,\, b\,\geq0 \quad\text{ for }C_{\kappa_2}=\max\{1,2^{\kappa_2-1}\}.
\]
This leads to the requirements $\rrr>m$, $\nu\leq2$, $(\kappa_2+1)\nu\leq2$, thus, via \eqref{rlnum}
\[
\frac{1}{\rrr}+\frac{1}{\ell}+\frac{\kappa_2+1}{2}\leq\frac{1}{m},
\]
which can only be satisfied in the case 
\begin{equation*}
d=m=1, \quad \ell=\rrr=\infty, \quad \kappa_2=1,
\end{equation*}
that is, \eqref{cond_r_kappa2_lo},
that includes the Allen-Cahn nonlinearity $f(\xi)=\xi^3$, but is restricted to one space dimension.

In the high regularity scenario \eqref{enest_usim}, we estimate
\begin{equation}\label{estgrowth_m2}
\begin{aligned}
&\|p\int_0^1f''(\uinf+\theta\usim)\,d\theta\,(\usim)^2\|_{L^2(\Omega)}\\
&\leq 
\|p\|_{L^{\rrr}(\Omega)}
\|\int_0^1f''(\uinf+\theta\usim(t))\,d\theta\,\usim(t)\|_{L^{2\rrr/(\rrr-2)}(\Omega)}
\|\usim(t)\|_{L^\infty(\Omega)}
\\
&\leq 
\|p\|_{L^{\rrr}(\Omega)}\Bigl(
c_2 \|\usim(t)\|_{L^{2\rrr/(\rrr-2)}(\Omega)}
\\
&\phantom{\leq\|p\|_{L^{\rrr}(\Omega)}\Bigl(}
+ C_2\,C_{\kappa_2}\,\bigl(\|\uinf\|_{L^{(\kappa_2+1){2\rrr/(\rrr-2)}}(\Omega)}^{\kappa_2+1}+\|\usim(t)\|_{L^{(\kappa_2+1){2\rrr/(\rrr-2)}}(\Omega)}^{\kappa_2+1}\bigr)\Bigr)
\|\usim(t)\|_{L^\infty(\Omega)}\\
&\leq \|p\|_{L^{\rrr}(\Omega)}\bigl(\Phi_\infty^1(\|\uinf\|_{H^1(\Omega)})+\Phi_\sim^1(\|\usim(t)\|_{H^1(\Omega)})\bigr)
\|\usim(t)\|_{H^1(\Omega)}\|\usim(t)\|_{H^2(\Omega)}
\end{aligned}
\end{equation}
with
\begin{equation}\label{Phi0Phisim}
\begin{aligned}
&\Phi_\sim^1(\xi)=C_2\,C_{\kappa_2}\,(C_{H^1\to L^{(\kappa_2+1){2\rrr/(\rrr-2)}}}^\Omega)^{\kappa_2+1}\,C_{H^2\to L^\infty}^\Omega\,\xi^{\kappa_2}\\ &\Phi_\infty^1(\xi)=\Phi_\sim^0(\xi)+c_2\,C_{H^1\to L^{2\rrr/(\rrr-2)}}^\Omega\,C_{H^2\to L^\infty}^\Omega.
\end{aligned}
\end{equation}
This leads to the requirements 
\[
\rrr\geq2 \text{ and }\frac{2\rrr}{\rrr-2}\leq\ell
\text{ and }\frac{(\kappa_2+1)2\rrr}{\rrr-2}\leq\ell,
\]
that is, \eqref{cond_r_kappa2_hi},
which even in three space dimensions with $\rrr=6$ includes the Allen-Cahn nonlinearity $\kappa_2=1$, $f(\xi)=\xi^3$, and with $\rrr=\infty$ even allows for $\kappa_2=2$, \revision{$f(\xi)=|\xi|^3\,\xi$}; 
\Margin{RevX 8.}
in one space dimension we expect even exponential nonlinearities to work out (although we do not include them formally here).

By means of \eqref{enest0_usim}, \eqref{enest_usim}, we have thus shown the following.
\begin{proposition}\label{prop:enests}
For $p\in L^{\rrr}(\Omega)$, $f\in C^2(\mathbb{R})$, under conditions \eqref{ellipticityL}, \eqref{ellipticregularityL}, 
\eqref{pinfty}, \eqref{growth}, 
the solution $\usim=u-\uinf$ to \eqref{ibvp_usim} (where $u$ solves \eqref{ibvp} and $\uinf$ solves \eqref{bvp}) satisfies the estimates
\begin{itemize}
\item[(lo)]  with \eqref{cond_r_kappa2_lo},
\begin{equation}\label{enest_usim_lo}
\begin{aligned}
&\partial_t^\alpha\|\usim(t)\|_{L^2(\Omega)}^2
+c\, \|\usim(t)\|_{H^1(\Omega)}^2 \bigl(1-D^0(t)\bigr)
\leq \tilde{C}\|r(t)-\rinf\|_{H^{-1}(\Omega)}^2\\
&\text{with }D^0(t)=C\|p\|_{L^{\rrr}(\Omega)}^2\bigl(\Phi_\infty^0(\|\uinf\|_{L^2(\Omega)})+\Phi_\sim^0(\|\usim(t)\|_{L^2(\Omega)})\bigr)^2\|\usim(t)\|_{L^2(\Omega)}^2
\end{aligned}
\end{equation} 
\item[(hi)]  with \eqref{cond_r_kappa2_hi} 
\begin{equation}\label{enest_usim_hi}
\begin{aligned}
&\partial_t^\alpha\|\usim(t)\|_{H^1(\Omega)}^2
+c\, \|\usim(t)\|_{H^2(\Omega)}^2 \bigl(1-D^1(t)\bigr)
\leq \tilde{C}\|r(t)-\rinf\|_{L^2(\Omega)}^2\\
&\text{with }D^1(t)=C\|p\|_{L^{\rrr}(\Omega)}^2\bigl(\Phi_\infty^1(\|\uinf\|_{H^1(\Omega)})+\Phi_\sim^1(\|\usim(t)\|_{H^1(\Omega)})\bigr)^2
\|\usim(t)\|_{H^1(\Omega)}^2,
\end{aligned}
\end{equation} 
\end{itemize}
where the constants $c$, $C$, $\tilde{C}$ are independent of $t$ 
and $\Phi_\infty^s$, $\Phi_\sim^s$, $s\in\{0,1\}$ are defined according to \eqref{Phi0Phisim_0}, \eqref{Phi0Phisim}. 
\end{proposition}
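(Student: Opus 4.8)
All the ingredients are in place above; the plan is simply to combine the two energy identities \eqref{enest0_usim}, \eqref{enest_usim} with the nonlinear bounds just derived, and to absorb the top-order norm of $\usim$ into the left-hand side. First I would record the two energy identities, obtained by testing the PDE in \eqref{ibvp_usim} with $\usim$ (for (lo)) and with $\mu\usim-\mathbb{L}\usim$, $\mu=\tfrac{1}{2\underline c}(1+2\|\pinf\|_{L^\infty(\Omega)}^2)$ (for (hi)), using the coercivity \eqref{coercivityDC}, the ellipticity \eqref{ellipticityL}, the sign condition \eqref{pinfty}, Young's inequality, and --- in the high regularity case --- elliptic regularity \eqref{ellipticregularityL}; crucially the cross term $\int_\Omega\pinf\usim\,\mathbb{L}\usim\,dx$ is estimated directly by Young's inequality rather than by parts, so as to avoid $\nabla\pinf$. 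In each case the right-hand side is a squared $X$-norm ($X=H^{-1}(\Omega)$ for (lo), $X=L^2(\Omega)$ for (hi)) of $p\,\tfrac12\int_0^1 f''(\uinf+\theta\usim)\,d\theta\,(\usim)^2-(r-\rinf)$, which I would split via the triangle inequality and $\|a+b\|^2\le 2\|a\|^2+2\|b\|^2$ into a nonlinear term plus $2\|r-\rinf\|_X^2$.

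Next I would bound the nonlinear term. Using the growth condition \eqref{growth}, the elementary inequality $(a+b)^{\kappa_2}\le C_{\kappa_2}(a^{\kappa_2}+b^{\kappa_2})$, the generalized H\"older inequality that peels off $p$ in $L^{\rrr}(\Omega)$, the factor $\int_0^1 f''(\uinf+\theta\usim)\,d\theta\,\usim$ in $L^\nu(\Omega)$ (for (lo)) respectively $L^{2\rrr/(\rrr-2)}(\Omega)$ (for (hi)), and a remaining factor $\usim$ in $L^\ell(\Omega)$ (for (lo)) respectively $L^\infty(\Omega)$ (for (hi)), together with the embeddings \eqref{embeddings}, I arrive at
\[
\Bigl\|p\,\tfrac12\int_0^1 f''(\uinf+\theta\usim(t))\,d\theta\,(\usim(t))^2\Bigr\|_X
\le\|p\|_{L^{\rrr}(\Omega)}\bigl(\Phi_\infty^s(\|\uinf\|_{H^s(\Omega)})+\Phi_\sim^s(\|\usim(t)\|_{H^s(\Omega)})\bigr)\,\|\usim(t)\|_{H^s(\Omega)}\,\|\usim(t)\|_{H^{s+1}(\Omega)}
\]
with $s=0$ for (lo) and $s=1$ for (hi), the $\Phi$'s as in \eqref{Phi0Phisim_0}, \eqref{Phi0Phisim}; admissibility of the H\"older/Sobolev exponents is exactly what forces \eqref{cond_r_kappa2_lo} in the low and \eqref{cond_r_kappa2_hi} in the high regularity case.

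Finally I would square this estimate, insert it into \eqref{enest0_usim}, \eqref{enest_usim}, bound the nonlinear contribution by $C\|p\|_{L^{\rrr}(\Omega)}^2\bigl(\Phi_\infty^s(\|\uinf\|_{H^s(\Omega)})+\Phi_\sim^s(\|\usim(t)\|_{H^s(\Omega)})\bigr)^2\|\usim(t)\|_{H^s(\Omega)}^2\cdot\|\usim(t)\|_{H^{s+1}(\Omega)}^2$, and observe that $\|\usim(t)\|_{H^{s+1}(\Omega)}^2$ is, up to a fixed constant, precisely the coercive term on the left-hand side; moving it over regroups it as $c\,\|\usim(t)\|_{H^{s+1}(\Omega)}^2(1-D^s(t))$ with $D^s$ as stated, while the leftover yields $\tilde C\|r(t)-\rinf\|_X^2$. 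The constants $c$, $C$, $\tilde C$ are explicit in $\underline c$, $\underline c_D$, $C_{\text{ell}}^\Omega$, $\|\pinf\|_{L^\infty(\Omega)}$ and the embedding constants, hence independent of $t$. The one genuinely delicate point is the exponent bookkeeping in the nonlinear estimate --- in particular recognizing that the low-regularity constraints $\rrr>m$, $\nu\le2$, $(\kappa_2+1)\nu\le2$ together with \eqref{rlnum} and \eqref{embeddings} collapse to $d=1$, $\rrr=\ell=\infty$, $\kappa_2=1$; everything else is routine Young/Cauchy--Schwarz manipulation.
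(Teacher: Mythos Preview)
Your proposal is correct and follows essentially the same approach as the paper: the derivation in Section~3.1 consists precisely of testing \eqref{ibvp_usim} with $\usim$ and with $\mu\usim-\mathbb{L}\usim$ (same choice of $\mu$, same avoidance of integrating the $\pinf$ term by parts), then bounding the nonlinear right-hand side via the growth condition \eqref{growth}, generalized H\"older, and the embeddings \eqref{embeddings} to arrive at the $\Phi$-estimates, with the exponent constraints collapsing to \eqref{cond_r_kappa2_lo} and \eqref{cond_r_kappa2_hi} exactly as you describe. The final absorption of the $\|\usim\|_{H^{s+1}}^2$ factor into the left-hand side to produce the $(1-D^s(t))$ structure is also identical.
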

\noindent
Note that other than $p\in L^{\rrr}(\Omega)$ and \eqref{pinfty} we do not need to impose any regularity assumptions on $p$ and $q$, once we have existence of a solution to \eqref{bvp}.

\subsection{Gronwall type estimates}
The estimates from Proposition~\ref{prop:enests} allow us to prove decay rates $\|\usim(t)\|^2=O(\decayfunc(t))$ with $\decayfunc$ as in \eqref{decay_usim}
as well known in case $\alpha=1$, whereas in case $\alpha\in(0,1)$ this can be deduced from results in, e.g., \cite[Chapter 5]{KubicaRyszewskaYamamoto:2020}.
(Note that there even better decay rates are shown; however, without inhomogeneity.) 
For the convenience of the reader, we summarize this in a lemma and provide its proof in the appendix.
\begin{lemma}\label{lem:eta}
Let $\alpha\in(0,1)$,
$\eta\in W^{1,1}(0,T)$ and assume that for any $\underline{t}\in(0,T)$, there exists $\beta\in(\alpha,1)$ such that $\eta\in W^{1,1/(1-\beta)} (\underline{t},T)$,
and that 
\begin{equation}\label{enest_eta}
\partial_t^\alpha\eta(t)+c_0\eta(t) \leq \phi(t) \text{ for almost every }t\in(0,T)
\end{equation}
holds for some $c_0>0$.
Then with the Mittag-Leffler functions $E_{\alpha,1}$, $E_{\alpha,\alpha}$, the estimate  
\begin{equation}\label{eta_leq_nu}
\begin{aligned}
&\eta(t) \leq \nu(t):=E_{\alpha,1}(-c_0t^\alpha)\, \eta(0) + \int_0^t (t-s)^{\alpha-1}E_{\alpha,\alpha}(-c_0(t-s)^\alpha)\phi(s)\, ds\\
&\text{ for all }t\in(0,T]
\end{aligned}
\end{equation}
is satisfied.\\
Moreover, if $\eta$ is nonnegative, $\eta(t)\geq0$, for almost every $t\in(0,T)$, then
\begin{equation}\label{bound_eta_phi}
\eta(t) \leq \eta(0) + \frac{1}{\Gamma(\alpha)} \, \int_0^t (t-s)^{\alpha-1}\,\phi(s)\, ds
\text{ for all }t\in(0,T].
\end{equation}
In particular, the premiss \eqref{enest_eta} with $\phi(t)=\phi_0 t^{-\alpha}$ implies
\begin{equation}\label{powerdecay_eta}
\eta(t)\leq \Bigl(\Gamma(1+\alpha)c_0^{-\alpha}\,\eta(0)+\frac{\phi_0\,C(\alpha)}{c_0}\Bigr) \, t^{-\alpha}
\text{ for all }t\in(0,T]
\end{equation}
with a constant $C(\alpha)>0$ depending only on $\alpha$
and, in case of nonnegative $\eta$,
\begin{equation}\label{bound_eta}
\eta(t) \leq \eta(0) + \frac{\phi_0}{\Gamma(\alpha)\,\alpha(1-\alpha)}
\text{ for all }t\in(0,T];
\end{equation}
the premiss \eqref{enest_eta} with $\phi(t)\equiv \phi_0$ implies
\begin{equation}\label{bound_eta_0}
\eta(t) \leq \eta(0)+\frac{\phi_0}{c_0}
\text{ for all }t\in(0,T].
\end{equation}

In case $\alpha=1$, for any $\eta\in W^{1,1}(0,T)$, estimate \eqref{enest_eta} implies \eqref{eta_leq_nu} with $E_{1,1}=\exp$. In particular, for $\phi(t)=\phi_0 e^{-c_1 t}$ with $c_1>c_0$ we have 
\begin{equation}\label{expdecay_eta}
\eta(t)\leq \Bigl(\eta(0)+\frac{\phi_0}{c_1-c_0}\Bigr) \, e^{-c_0 t}
\text{ for all }t\in(0,T].
\end{equation}
\end{lemma}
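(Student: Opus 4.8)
The plan is to compare $\eta$ with the solution of the associated \emph{linear} Caputo equation and then read off the asserted rates from the sign and decay of the Mittag-Leffler kernels. Set $\psi:=\phi-\partial_t^\alpha\eta-c_0\eta\ge0$ a.e., so that $\eta$ solves $\partial_t^\alpha\eta+c_0\eta=\phi-\psi$ with datum $\eta(0)$. The hypothesis that on each $(\underline t,T)$ one has $\eta\in W^{1,1/(1-\beta)}(\underline t,T)$ for some $\beta\in(\alpha,1)$ is exactly what renders all convolutions below Lebesgue-defined: $k^\alpha\in L^{1/\beta}_{\mathrm{loc}}$ and $\dot\eta\in L^{1/(1-\beta)}$ are H\"older conjugate, so $\partial_t^\alpha\eta=k^\alpha*\dot\eta$ is locally bounded on $(\underline t,T)$, and together with $\eta\in W^{1,1}(0,T)\subset C[0,T]$ this lets one invoke --- or prove by smoothing and passing to the limit as $\underline t\to0^+$, in the spirit of \cite[Chapter~5]{KubicaRyszewskaYamamoto:2020} --- the variation-of-constants formula
\[
\eta(t)=E_{\alpha,1}(-c_0t^\alpha)\eta(0)+\bigl(K*(\phi-\psi)\bigr)(t),\qquad K(\tau):=\tau^{\alpha-1}E_{\alpha,\alpha}(-c_0\tau^\alpha).
\]
Since $K$ is nonnegative --- indeed completely monotone --- for $\alpha\in(0,1)$, $c_0>0$, one has $K*\psi\ge0$, so \eqref{eta_leq_nu} follows. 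If in addition $\eta\ge0$ a.e., then $c_0\eta\ge0$ gives $\partial_t^\alpha\eta\le\phi$, and convolving with the positive kernel $k^{1-\alpha}$, together with $k^{1-\alpha}*\partial_t^\alpha\eta=k^{1-\alpha}*k^\alpha*\dot\eta=1*\dot\eta=\eta-\eta(0)$, yields \eqref{bound_eta_phi}.

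The remaining assertions are explicit estimates of $\nu$, based on the standard bounds $0\le E_{\alpha,1}(-x)\le\min\{1,\Gamma(1+\alpha)/x\}$ and $0\le E_{\alpha,\alpha}(-x)\le C_\alpha/(1+x)$ for $x\ge0$, and on the primitive identity $\int_0^t K(\tau)\,d\tau=c_0^{-1}(1-E_{\alpha,1}(-c_0t^\alpha))$, obtained by term-by-term differentiation of $E_{\alpha,1}(-c_0t^\alpha)$. For $\phi(t)=\phi_0t^{-\alpha}$ the initial term of $\nu$ is bounded by $\Gamma(1+\alpha)c_0^{-\alpha}\eta(0)\,t^{-\alpha}$, while $(K*\phi_0(\cdot)^{-\alpha})(t)$ is split at $s=t/2$: on $(t/2,t)$ pull out $s^{-\alpha}\le(t/2)^{-\alpha}$ and use $\int_0^tK(\tau)\,d\tau\le c_0^{-1}$; on $(0,t/2)$ pull out $\sup_{\tau\ge t/2}K(\tau)\le 2C_\alpha c_0^{-1}t^{-1}$ and integrate $\int_0^{t/2}s^{-\alpha}\,ds$. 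This gives \eqref{powerdecay_eta} with an explicit $C(\alpha)$. For nonnegative $\eta$, estimate \eqref{bound_eta} follows from \eqref{bound_eta_phi} and the elementary bound $\int_0^t(t-s)^{\alpha-1}s^{-\alpha}\,ds\le\tfrac{1}{1-\alpha}+\tfrac{1}{\alpha}=\tfrac{1}{\alpha(1-\alpha)}$ (again by splitting at $t/2$); for $\phi\equiv\phi_0$, the primitive identity bounds $K*\phi_0$ by $\phi_0/c_0$ and $E_{\alpha,1}(-c_0t^\alpha)\le1$ handles the initial term, giving \eqref{bound_eta_0}.

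Finally, for $\alpha=1$ the Caputo derivative is the classical derivative, so multiplying \eqref{enest_eta} by the integrating factor $e^{c_0t}$ and integrating recovers \eqref{eta_leq_nu} with $E_{1,1}=\exp$; for $\phi(t)=\phi_0e^{-c_1t}$ with $c_1>c_0$ one computes $\int_0^t e^{-c_0(t-s)}e^{-c_1s}\,ds=e^{-c_0t}\,\frac{1-e^{-(c_1-c_0)t}}{c_1-c_0}\le\frac{e^{-c_0t}}{c_1-c_0}$, which yields \eqref{expdecay_eta}.

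I expect the one genuinely delicate step to be the first: making the variation-of-constants representation and the comparison $K*\psi\ge0$ rigorous at the low regularity assumed --- equivalently, a fractional comparison principle for $W^{1,1}$ sub-solutions --- and pinning down the sign and decay properties of the Mittag-Leffler kernels together with the stated constants; everything after that is bookkeeping.
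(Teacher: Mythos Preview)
Your proof is correct and covers all parts of the lemma. The one place where it genuinely differs from the paper's argument is the derivation of \eqref{eta_leq_nu}: you introduce the slack $\psi:=\phi-\partial_t^\alpha\eta-c_0\eta\ge0$, invoke the variation-of-constants representation for $\eta$ itself, and then drop the nonnegative term $K*\psi$. The paper instead avoids establishing a representation formula for $\eta$: it only uses that $\nu$ (as defined) satisfies \eqref{enest_eta} with equality, forms the difference $\eta-\nu$, and argues by contradiction via the fractional maximum principle \cite[Lemma~5.2]{KubicaRyszewskaYamamoto:2020} (this is precisely what the $W^{1,1/(1-\beta)}(\underline t,T)$ hypothesis is tailored for). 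Your route is arguably more transparent once the representation is granted, but---as you yourself note---it shifts the burden onto justifying the Duhamel formula for $\eta$ at $W^{1,1}$ regularity; the paper's comparison argument sidesteps this entirely since only $\nu$ needs to be a genuine solution.

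The remaining estimates are handled essentially the same way in both: convolution with $k^{1-\alpha}$ for \eqref{bound_eta_phi}, the primitive identity $\int_0^t K=c_0^{-1}(1-E_{\alpha,1}(-c_0t^\alpha))$ for \eqref{bound_eta_0}, and a split at $t/2$ together with Mittag--Leffler asymptotics for \eqref{powerdecay_eta}. On the latter your argument is a minor variant---you bound $\sup_{\tau\ge t/2}K(\tau)$ directly, whereas the paper integrates by parts using $c_0\tau^{\alpha-1}E_{\alpha,\alpha}(-c_0\tau^\alpha)=-\frac{d}{d\tau}E_{\alpha,1}(-c_0\tau^\alpha)$---but both yield a constant depending only on $\alpha$.
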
 
\begin{proof}
See the appendix.
\end{proof}

The following Lemma will also allow us to conclude decay of the higher order norm $\|\usim(t)\|_{H^2(\Omega)}$ from \eqref{enest_usim_hi}.
As compared to the exact estimates from Lemma~\ref{lem:eta}, we only obtain this in an asymptotic sense for $\alpha<1$.
Recall that $\eta(t) \lesssim C\,t^{-\gamma}$ as $t\to\infty$ means 
$\limsup_{t\to\infty} \eta(t)\, t^\gamma\leq C$.

\begin{lemma}\label{lem:eta01}
Let $\alpha\in(0,1]$, let $\eta_0:[0,T)\to[0,\infty)$, $\eta_1:\,(0,T)\to[0,\infty)$ be nonnegative measureable functions, and assume that
\begin{equation}\label{enest_eta01}
\partial_t^\alpha\eta_0(t)+c_0\eta_0(t)+c_1\eta_1(t) \leq C t^{-\beta} \text{ for almost every }t\in(0,T)
\end{equation}
holds for some $c_0,\,c_1\,\beta\in(0,1]$.
Then if $T=\infty$ 
\begin{equation}\label{decay_eta01}
\eta_j(t)\lesssim C_j\,t^{-\min\{\alpha,\beta\}}  \quad \text{ as }t\to\infty \qquad j\in\{0,1\}
\end{equation}
holds with $C_j=\frac{C\Gamma(1-\beta)+\eta_0(0)}{c_j}$. 

If $\alpha=1$ then 
\begin{equation}\label{enest_eta01_exp}
\partial_t\eta_0(t)+c_0\eta_0(t)+c_1\eta_1(t) \leq C e^{-c_0t} \text{ for almost every }t\in(0,T)
\end{equation}
implies
\begin{equation}\label{decay_eta01_exp}
\eta_0(t)\leq (\eta_0(0)+Ct) e^{-c_0t} \quad 
\eta_1(t)\leq \tilde{C} e^{-\tilde{c}_0t} \quad 
\text{ for almost every }t\in(0,T)
\end{equation}
for some $\tilde{C}>0$, $0<\tilde{c}_0<c_0$.
\end{lemma}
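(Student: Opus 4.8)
The plan is to treat the fractional case and the exponential case separately, in each case first extracting decay of $\eta_0$ from a scalar fractional (resp. ordinary) differential inequality and then bootstrapping decay of $\eta_1$ from the same inequality. For $\alpha\in(0,1]$ and the hypothesis \eqref{enest_eta01}, I would first discard the nonnegative term $c_1\eta_1(t)$ to obtain $\partial_t^\alpha\eta_0(t)+c_0\eta_0(t)\le Ct^{-\beta}$, and apply Lemma~\ref{lem:eta} (its comparison estimate \eqref{eta_leq_nu}, valid for nonnegative $\eta_0$ via \eqref{bound_eta_phi}) to get $\eta_0(t)\le \eta_0(0)+\frac{1}{\Gamma(\alpha)}\int_0^t(t-s)^{\alpha-1}s^{-\beta}\,ds\cdot C = \eta_0(0)+\frac{C\,B(\alpha,1-\beta)}{\Gamma(\alpha)}\,t^{\alpha-\beta}$ when $\beta<1$, and a logarithmic/constant bound when $\beta=1$. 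Together with the obvious bound $\eta_0(t)\le\eta_0(0)+$const coming from \eqref{bound_eta}, a standard interpolation between the two regimes gives $\eta_0(t)\lesssim C_0\,t^{-\min\{\alpha,\beta\}}$; the precise constant $\frac{C\Gamma(1-\beta)+\eta_0(0)}{c_0}$ should fall out of tracking the Mittag-Leffler asymptotics $E_{\alpha,1}(-c_0t^\alpha)\sim \frac{1}{c_0\Gamma(1-\alpha)}t^{-\alpha}$ and the convolution $\int_0^t(t-s)^{\alpha-1}E_{\alpha,\alpha}(-c_0(t-s)^\alpha)s^{-\beta}\,ds$ as $t\to\infty$, which is where the factor $\Gamma(1-\beta)$ enters.

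For the decay of $\eta_1$ I would rearrange \eqref{enest_eta01} to $c_1\eta_1(t)\le Ct^{-\beta}-\partial_t^\alpha\eta_0(t)-c_0\eta_0(t)\le Ct^{-\beta}-\partial_t^\alpha\eta_0(t)$. Since $\eta_0\ge0$ with $\eta_0(0)\ge0$, one has $\partial_t^\alpha\eta_0(t)=\frac{d}{dt}\int_0^t k^{1-\alpha}(t-s)(\eta_0(s)-\eta_0(0))\,ds$; using the already-established decay $\eta_0(s)\lesssim C_0 s^{-\min\{\alpha,\beta\}}$ and monotonicity/positivity properties of $k^{1-\alpha}$ one estimates $|\partial_t^\alpha\eta_0(t)|\lesssim t^{-\min\{\alpha,\beta\}}$ (this mirrors the simple estimate in Remark~\ref{rem:decaydtalphau}), which feeds back to give $\eta_1(t)\lesssim C_1 t^{-\min\{\alpha,\beta\}}$ with $C_1=C_0\,c_0/c_1$ up to constants, matching the stated $C_1=\frac{C\Gamma(1-\beta)+\eta_0(0)}{c_1}$. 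The case $\alpha=1$ is simpler: from \eqref{enest_eta01_exp}, dropping $c_1\eta_1$ gives $\eta_0'+c_0\eta_0\le Ce^{-c_0t}$, so by \eqref{expdecay_eta} applied at the resonant rate (or by direct integration of $(e^{c_0t}\eta_0)'\le C$) one gets $\eta_0(t)\le(\eta_0(0)+Ct)e^{-c_0t}$; then $c_1\eta_1(t)\le Ce^{-c_0t}-\eta_0'(t)-c_0\eta_0(t)$, and since $\eta_0'(t)=(\eta_0(0)+Ct)(-c_0)e^{-c_0t}+Ce^{-c_0t}$ one checks the right-hand side is $O((1+t)e^{-c_0t})$, which is bounded by $\tilde Ce^{-\tilde c_0 t}$ for any $\tilde c_0<c_0$, giving \eqref{decay_eta01_exp}.

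The main obstacle I anticipate is the asymptotic bookkeeping in the fractional case: controlling the tail of the Mittag-Leffler convolution $\int_0^t(t-s)^{\alpha-1}E_{\alpha,\alpha}(-c_0(t-s)^\alpha)s^{-\beta}\,ds$ uniformly as $t\to\infty$ and identifying the sharp leading constant. The subtlety is that the behaviour bifurcates at $\beta=\alpha$: for $\beta>\alpha$ the $E_{\alpha,1}(-c_0t^\alpha)\eta_0(0)$ term and the near-diagonal part of the convolution both decay like $t^{-\alpha}$ and dominate, whereas for $\beta<\alpha$ the slowly-decaying source $s^{-\beta}$ dominates and one needs the standard Abelian-type estimate $\int_0^t(t-s)^{\alpha-1}E_{\alpha,\alpha}(-c_0(t-s)^\alpha)s^{-\beta}\,ds\sim \frac{\Gamma(1-\beta)}{c_0}t^{-\beta}$ (this is exactly a Tauberian/Abelian statement, consistent with the paper's announced use of Tauberian theorems); splitting the integral at $s=t/2$ and handling each piece with the decay of $E_{\alpha,\alpha}$ is the routine but slightly delicate computation that yields the claimed $\Gamma(1-\beta)$. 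Everything else — dropping nonnegative terms, invoking Lemma~\ref{lem:eta}, and the $\alpha=1$ elementary integration — is straightforward.
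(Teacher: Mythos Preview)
Your approach is genuinely different from the paper's and contains a real gap in the $\eta_1$ step.

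The paper does \emph{not} bootstrap $\eta_1$ from $\eta_0$. Instead it takes the Laplace transform of \eqref{enest_eta01} directly, obtaining
\[
(s^\alpha+c_0)(\mathcal{L}\eta_0)(s)+c_1(\mathcal{L}\eta_1)(s)\le C\Gamma(1-\beta)s^{\beta-1}+\eta_0(0)s^{\alpha-1},
\]
reads off $(\mathcal{L}\eta_j)(s)\lesssim C_j\,s^{\min\{\alpha,\beta\}-1}$ as $s\to0^+$ for both $j=0,1$ simultaneously (using nonnegativity of $\eta_0,\eta_1$), and then invokes a Tauberian theorem to pass to $t\to\infty$. In the exponential case it multiplies by $e^{c_0t}$, integrates, and uses $\eta_0\ge0$ to obtain $c_1\int_0^t e^{c_0s}\eta_1(s)\,ds\le \eta_0(0)+Ct$, from which the decay of $\eta_1$ is asserted.

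Your bootstrap for $\eta_1$ breaks down because you need a \emph{pointwise} bound on $\partial_t^\alpha\eta_0(t)$ (resp.\ $\eta_0'(t)$), and decay of $\eta_0$ alone does not provide this. In the exponential case you write ``since $\eta_0'(t)=(\eta_0(0)+Ct)(-c_0)e^{-c_0t}+Ce^{-c_0t}$'', but this differentiates the \emph{upper bound} as if it were an equality; the actual $\eta_0'$ is uncontrolled pointwise. In the fractional case the situation is worse: the reference to Remark~\ref{rem:decaydtalphau} is misplaced, since that remark bounds $\partial_t^\alpha u$ via $\|u_t\|$, not via $\|u\|$, and here you have no information on $\eta_0'$. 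The Laplace--Tauberian route avoids this entirely because in transform space the fractional derivative becomes multiplication by $s^\alpha$ and both $\eta_0,\eta_1$ appear additively with the correct signs.

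Your plan for $\eta_0$ alone (drop $c_1\eta_1\ge0$, apply a version of Lemma~\ref{lem:eta}) is sound in principle, but note that \eqref{powerdecay_eta} is stated only for $\phi(t)=\phi_0 t^{-\alpha}$; extending to $t^{-\beta}$ with $\beta\ne\alpha$ requires redoing the Mittag--Leffler convolution estimate, which you correctly flag as the delicate part. The paper's route gives the claimed constant $C_j=(C\Gamma(1-\beta)+\eta_0(0))/c_j$ immediately from $\mathcal{L}[t^{-\beta}](s)=\Gamma(1-\beta)s^{\beta-1}$, with no splitting at $t/2$ needed.
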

\begin{proof}
Applying the Laplace transform $(\mathcal{L}v)(s)=\int_0^\infty e^{-st} v(t)\,dt$ to both sides of \eqref{enest_eta01} yields
\[
(s^\alpha +c_0)(\mathcal{L}\eta_0)(s)+c_1(\mathcal{L}\eta_1)(s)
\leq C\Gamma(1-\beta)\,s^{\beta-1}+ \eta_0(0)\,s^{\alpha-1}\quad s>0
\]
and  $\mathcal{L}\eta_0$, $\mathcal{L}\eta_1$ are well-defined on the right half plane.
Thus, in particular we have the following asyptotics near zero 
\[
(\mathcal{L}\eta_j)(s)\lesssim  \frac{C\Gamma(1-\beta)+\eta_0(0)}{c_j}\, s^{\min\{\alpha,\beta\}-1}, \quad \text{ as }s\to0\qquad j\in\{0,1\}.
\]
A Tauberian Theorem (e.g., \cite[Theorem 3.14]{BBB} with $dw=\eta_j(t)\,dt$), which is also valid for $\alpha=1$, thus yields
\[
\eta_j(t)\lesssim  \frac{C\Gamma(1-\beta)+\eta_0(0)}{c_j}\, t^{-\min\{\alpha,\beta\}} 
\quad \text{ as }t\to\infty\qquad j\in\{0,1\}.
\]
In case $\alpha=1$ we obtain from \eqref{enest_eta01_exp}, after multiplying with $e^{c_0t}$ and integration
\[
0\leq\eta_0(t)e^{c_0t}\leq \eta_0(t_0)+ C(t-t_0) -c_1\int_{t_0}^t \eta_1(s)e^{c_0s}\,ds\qquad 0<t_0 < t,
\]
hence, with $t_0=0$,
$\eta_0(t)\leq (\eta_0(0)+Ct)e^{-c_0t}$ and,
after rearranging,
\[
c_1\int_{t_0}^t e^{c_0s}\eta_1(s)\,ds\leq \eta_0(t_0)+C(t-t_0) \qquad 0<t_0 < t,
\]
which implies existence of $\tilde{C}>0$, $0<\tilde{c}_0<c_0$ 
such that
\[
\eta_1(t)\leq \tilde{C} e^{-\tilde{c}_0t}.
\]
\end{proof}

Under a power law / exponential decay condition \eqref{decay_r} on $r-\rinf$ 
with $\decayfunc$ as in \eqref{decay_usim}, $\omega=c_1>0$ with $X\in\{H^{-1}(\Omega),\, L^2(\Omega)\}$ 
and a smallness condition on the initial difference, this allows us to prove power law / exponential  decay of $\usim$. 
\subsection{Decay of $\usim$; proof of Theorem~\ref{thm:decay_usim}}\label{subsec:decayusim}
Again, we focus on the case $\alpha<1$.\\
To tackle the negative power law terms on the left hand side of the estimates \eqref{enest_usim_lo}, \eqref{enest_usim_hi}, we apply barrier's method to show the following
(which we will use with $\eta(t)=\|\usim(t)\|_{H^s(\Omega)}^2$, $\tilde{\eta}(t)=\|\usim(t)\|_{H^{s+1}(\Omega)}^2$ and $\Phi$ as in \eqref{Phi} below):
\begin{quote}
For $\alpha\in(0,1)$, $\eta$ nonnegative as in Lemma~\ref{lem:eta} the premiss
\begin{equation}\label{enest_eta_Phi}
\begin{aligned}
&\partial_t^\alpha\eta(t)+c_0\eta(t)+c_1\tilde{\eta}(t)(1-\Phi(\eta(t)))\leq c_2 t^{-\alpha} 
\text{ for almost every }t\in(0,T)\\
&\text{ and } \Phi\left(\eta(0) + \frac{c_2}{\Gamma(\alpha)\,\alpha(1-\alpha)}\right)<1
\end{aligned}
\end{equation}
with some nonnegative functions $\tilde{\eta}\geq0$, $\Phi\geq0$, where the latter is nondecreasing, and constants $c_0$, $c_1$, $c_2$ $>0$, implies 
\begin{equation}\label{Phismaller1}
\Phi(\eta(t))\leq1 \text{ for all }t\in(0,T]
\end{equation}
and thus, via Lemma~\ref{lem:eta}, \eqref{powerdecay_eta}.
\end{quote}
Assume contrarily that there exists a time $t_*\in(0,T]$ such that $\Phi(\eta(t_*))>1$; without loss of generality, $t_*$ is the minimal such time. As a consequence, 
\[
\partial_t^\alpha\eta(t)+c_0\eta(t)\leq c_2 t^{-\alpha} \text{ for almost every }t\in(0,t_*)
\]
and thus by \eqref{bound_eta} in Lemma~\ref{lem:eta} and monotonicity of $\Phi$
\[
\Phi(\eta(t_*)) \leq \Phi\left(\eta(0) + \frac{c_2}{\Gamma(\alpha)\,\alpha(1-\alpha)}\right) \,<1,
\]
a contradiction. 

In order to establish \eqref{enest_eta_Phi} with $\eta(t)=\|\usim(t)\|_{H^s(\Omega)}^2$, $\tilde{\eta}(t)=\|\usim(t)\|_{H^{s+1}(\Omega)}^2$ from \eqref{enest_usim_lo} or \eqref{enest_usim_hi}, we split 
\[
\begin{aligned}
&c\, \|\usim(t)\|_{H^{s+1}(\Omega)}^2 \bigl(1-D^s(t)\bigr)\\
&\geq \frac{c}{2}\, \|\usim(t)\|_{H^s(\Omega)}^2
+ \frac{c}{2}\, \|\usim(t)\|_{H^{s+1}(\Omega)}^2 \bigl(1-2D^s(t)\bigr)
\end{aligned}
\]
for $s\in\{0,1\}$, and set $c_0=c_1=\frac{c}{2}$, 
\begin{equation}\label{Phi}
\Phi(\eta)=
C\|p\|_{L^{\rrr}(\Omega)}^2\bigl(\Phi_\infty^s(\|\uinf\|_{H^s(\Omega)})+\Phi_\sim^s(\sqrt{\eta})\bigr)^2\,\eta.
\end{equation}

The implication \eqref{enest_eta_Phi} $\Rightarrow$ \eqref{Phismaller1} remains valid with the upper bound $1$ replaced by $\frac12$, thus implying 
\[
\partial_t^\alpha(\eta(t)-\eta(0))+c_0\eta(t)+\frac{c_1}{2}\tilde{\eta}(t)\leq c_2 t^{-\alpha} 
\text{ for almost every }t\in(0,T).
\]
Convolving both sides with $\kernel^{1-\alpha}$ we obtain (like in \eqref{etak1malpha}, \eqref{estconst})
\[
\eta(t)-\eta(0)+c_0(\kernel^{1-\alpha}*\eta)(t)+\frac{c_1}{2}(\kernel^{1-\alpha}*\tilde{\eta})(t)
\leq \frac{c_2}{\Gamma(\alpha)}\left(\frac{1}{1-\alpha}+\frac{1}{\alpha}\right)\,, 
\]
hence \eqref{bound_usim}.

In case $\alpha\in(0,1)$, $T=\infty$, we use Lemma~\ref{lem:eta01} to first of all obtain $\|\usim(t)\|_{H^{s+1}(\Omega)}^2\lesssim C t^{-\alpha}$ as $t\to\infty$ and then, by inserting into the PDE \eqref{ibvp_usim}, also 
\[
\begin{aligned}
&\|\partial_t^\alpha u^\sim(t)\|_{H^{s-1}(\Omega)}\\
&=\|(-\mathbb{L}+\pinf)\usim(t)-p\,\frac12\int_0^1f''(\uinf+\theta\usim(t))\,d\theta\,(\usim)^2+r(t)-\rinf\|_{H^{s-1}(\Omega)}\\
&\lesssim\|\usim(t)\|_{H^{s+1}(\Omega)}+\|r(t)-\rinf\|_{H^{s-1}(\Omega)}
\end{aligned}
\]
thus \eqref{decay_usim_Tauber}.
 

\subsection{Decay of $u_t$; proof of Theorem~\ref{prop:decay_usim_t}}\label{subsec:decayut}
Estimate \eqref{decay_usim_Tauber} already provides a decay rate on the time derivative in a relatively weak spatial norm.
To obtain a decay estimate in a stronger norm, 
we also consider the time differentiated version of the PDE in \eqref{ibvp_usim} for 
$\partial_t^{\altil} u^\sim=\partial_t^{\altil} u$ for $\altil\in\{\alpha,1\}$ 
\begin{equation}\label{timediffPDE}
\begin{aligned}
&(\partial_t^\alpha -\mathbb{L}+\pinf)\partial_t^{\altil}\usim
=\text{rhs}\\
&\text{ with }\ \text{rhs}=\begin{cases}
-p\,\kernel^{\altil}*\Bigl(\int_0^1f''(\uinf+\theta\usim)\,d\theta\usim\,\usim_t\Bigr)+\partial_t^{\altil} r&\text{ if }\altil=\alpha\in(0,1]\\
-p\,\Bigl(\int_0^1f''(\uinf+\theta\usim)\,d\theta\usim\,\usim_t\Bigr)+r_t-\kernel^\alpha(t)u_t(0)&\text{ if }\altil=1,  \ \alpha\in(0,1)\\
\end{cases}
\end{aligned}
\end{equation}
where we formally set $\kernel^1*:=\text{id}$ to include $\alpha=1$ in the first case and used 
\[
\begin{aligned}
&(\partial_t\partial_t^\alpha v)(t)
= \kernel^\alpha(t)v_t(0)+\partial_t^\alpha v_t(t)
\end{aligned}
\]
as well as  
\[
\begin{aligned}
&\partial_t^{\altil}\left(f(\uinf+\usim)-f(\uinf)-f'(\uinf)\usim\right)\\
&=\kernel^{\altil}*\left(f'(\uinf+\usim)-f'(\uinf)\right)\usim_t
=\kernel^{\altil}*\Bigl(\int_0^1f''(\uinf+\theta\usim)\,d\theta\,\usim\,\usim_t\Bigr).
\end{aligned}
\]
We test with $\mu\partial_t^{\altil}\usim-\mathbb{L}\partial_t^{\altil}\usim$, thus focusing on the higher regularity setting $s=1$.
This, analogously to \eqref{enest_usim}, yields
\begin{equation}\label{enest_usim_t}
\begin{aligned}
&\min\left\{\frac{1+2\|\pinf\|_{L^\infty(\Omega)}^2}{2\underline{c}},\, \underline{c}_D\right\} \partial_t^\alpha\|\partial_t^{\altil}\usim(t)\|_{H^1(\Omega)}^2
+\frac{1}{2(C_{\text{ell}}^\Omega)^2}\, \|\partial_t^{\altil}\usim(t)\|_{H^2(\Omega)}^2
\\
&\leq 
\left(\frac{1+2\|\pinf\|_{L^\infty(\Omega)}^2}{2\underline{c}^2}+2\right)\,
\|p\,\kernel^{\altil}*(\bar{\bar{f}}\,\usim_t)(t)-\partial_t^{\altil} r(t)+\chi_{\alpha<1}\kernel^\alpha(t)u_t(0)\|_{L^2(\Omega)}^2
\end{aligned}
\end{equation}
with the indicator variable $\chi_A=\begin{cases}1&\text{ if $A$ holds}\\0&\text{ else}\end{cases}$ and 
\begin{equation}\label{fbarbar}
\bar{\bar{f}}(t,x) = f'(u(t,x))-f'(\uinf(x))=\int_0^1f''(\uinf(x)+\theta\usim(t,x))\,d\theta\,\usim(t,x).
\end{equation}
In case $\altil<1$, the convolution term on the right hand side of \eqref{enest_usim_t} is problematic, since it contains $\usim_t$, which is a higher derivative than what we can estimate in case $\altil<1$.
In order to tackle it, we would need a commutator estimate. However such an estimate typically takes the form \cite[Corollary 1.4 (2) with $A^{\altil}=\partial_t^{\altil}$]{DongLi2019}  
\begin{equation}\label{estDongLi}
\|\partial_t^{\altil}(h\,g)-h\,\partial_t^{\altil}\|_{L^\mathfrak{p}(0,t)}\leq 
C(\altil,\alpha_1,\mathfrak{p})\|\partial_t^{\alpha_1}h\|_{L^\mathfrak{p}(0,t)} \|\partial_t^{\altil-\alpha_1}g\|_{L^\infty(0,t)}
\end{equation}
for $0\leq \alpha_1\leq \altil$ and the nonlocal character of the right hand side does not allow to reproduce decay as $t\to\infty$.

Decay as $t\to\infty$ of $\partial_t^{\altil}\usim$ can therefore only be rigorously be verified in case $\altil=1$ in which the right hand side in \eqref{enest_usim_t} under a growth condition \eqref{growth} can be estimated similarly to \eqref{estgrowth_m2}
\begin{equation}\label{estgrowth_m2_alpha1}
\begin{aligned}
&\|p\int_0^1f''(\uinf+\theta\usim(t))\,d\theta\,\usim(t)\,\usim_t(t)\|_{L^2(\Omega)}\\
&\leq 
\|p\|_{L^{\rrr}(\Omega)}
\bigl(\Phi_\infty^1(\|\uinf\|_{H^1(\Omega)})+\Phi_\sim^1(\|\usim(t)\|_{H^1(\Omega)})\bigr)\|\usim(t)\|_{H^1(\Omega)}
\,\|\usim_t(t)\|_{H^2(\Omega)},
\end{aligned}
\end{equation}
where through the barrier argument at the beginning of the proof of Theorem~\ref{thm:decay_usim} we have already guaranteed that under a smallness assumption on $\|\usim(0)\|_{H^1(\Omega)}$, the bound
\[\begin{aligned}
\left(\frac{1+2\|\pinf\|_{L^\infty(\Omega)}^2}{2\underline{c}^2}+2\right)\,
\|p\|_{L^{\rrr}(\Omega)}
\bigl(\Phi_\infty^1(\|\uinf\|_{H^1(\Omega)})+\Phi_\sim^1(\|\usim(t)\|_{H^1(\Omega)})\bigr)\|\usim(t)\|_{H^1(\Omega)}\\
\leq\frac{1}{4(C_{\text{ell}}^\Omega)^2}\text{ for all }t>0 
\end{aligned}\]
holds. 
Using this in \eqref{enest_usim_t} implies
\begin{equation}\label{enest_usim_t_hi}
\begin{aligned}
&\partial_t^\alpha\|\usim_t(t)\|_{H^1(\Omega)}^2
+c_0\, \|\usim_t(t)\|_{H^1(\Omega)}^2  
+c_1\, \|\usim_t(t)\|_{H^2(\Omega)}^2\\  
&\leq\tilde{C}\Bigl(\|r_t(t)\|_{L^2(\Omega)}^2+\kernel^\alpha(t)^2\|u_t(0)\|_{L^2(\Omega)}^2\revision{\Bigr)},
\end{aligned}
\end{equation} 
\Margin{RevX 9.}
for some $c_0,\,c_1\,>0$. 
\\
Thus, in case $\alpha=1$ 
\begin{equation}\label{decay_usim_t}
\begin{aligned}
&\|u_t(t)\|_{H^1(\Omega)}^2=\|\usim_t(t)\|_{H^1(\Omega)}^2\\
&\leq e^{-c_0 t} \|\usim_t(0)\|_{H^1(\Omega)}^2
+\tilde{C}\int_0^t e^{-c_0 (t-s)}\,\|r_t(s)\|_{L^2(\Omega)}^2\, ds\quad t>0,
\end{aligned}
\end{equation}
where the right hand side tends to zero if 
\begin{equation}\label{rtL2}
r_t\in L^2(0,\infty;L^2(\Omega)), \quad \|r_t(t)\|_{L^2(\Omega)}\to0\quad \text{ as }t\to\infty
\end{equation} 
due to the estimate
\[
\int_0^t e^{-c_0 (t-s)}\,\|r_t(s)\|_{L^2(\Omega)}^2\, ds
\leq 
 e^{-c_0 t/2}\,\int_0^{t/2}\|r_t(s)\|_{L^2(\Omega)}^2\, ds
+\frac{1-e^{-c_0 t/2}}{c_0}\,\sup_{s\geq t/2}\|r_t(s)\|_{L^2(\Omega)}^2\, ds.
\]
\footnote{the latter is not implied by $r_t\in L^2(0,\infty;L^2(\Omega))$, as the counterexample 
$r_t=\sum_{j\in\mathbb{N}}\chi_{[j,j+j^{-4}]}$ shows.}
thus proving Theorem~\ref{prop:decay_usim_t} (a).

If $\alpha\in (0,1]$ and $\|r_t(t)\|_{L^2(\Omega)}^2$ not only tends to zero but decays at a power law / exponential rate, we can also invoke Lemma~\ref{lem:eta01} with 
$\eta_0(t)=\|u_t(t+t_0)\|_{H^1(\Omega)}$, 
$\eta_1(t)=\|u_t(t+t_0)\|_{H^2(\Omega)}$, 
to obtain, under the a priori assumption $\eta_0(0)=\|u_t(t_0)\|_{H^1(\Omega)}<\infty$ and using the known asymptotics of $\kernel^\alpha(t)^2$ in case $\alpha<1$
\begin{itemize}
\item[(b)] If $T=\infty$ and $\|r_t(t)\|_{L^2(\Omega)}^2\leq C_r t^{-\beta}$:
\[ 
\|u_t(t)\|_{H^2(\Omega)}^2\lesssim \tilde{C}(C_r+\chi_{\alpha<1}\Gamma(1-\alpha)^{-2}\|u_t(0)\|_{L^2(\Omega)}^2)\, (t-t_0)^{-\min\{\alpha,\beta\}}\text{ as }t\to\infty
\]
\item[(c)] If $\alpha=1$ and $\|r_t(t)\|_{L^2(\Omega)}^2\leq C_r e^{-c_1 t}$, $t\in (t_0,T)$:
\[ 
\|u_t(t)\|_{H^2(\Omega)}^2\lesssim \tilde{C}C_r e^{-\tilde{c}_0 (t-t_0)}, \ t\in (t_0,T).
\]
\end{itemize}
Estimate \eqref{powerdecay_eta} in Lemma~\ref{lem:eta} implies 
(c) in case $\alpha\in(0,1)$.

\bigskip

\section{Proof of Theorem~\ref{thm:decay_usim_nosmallness}}\label{sec:proof_decay_noinitialsmallness}
Under condition \eqref{boundpqf_int}, the alternative characterization of the  \revision{difference} 
\Margin{RevX 10.}
$\usim$ between a solution to \eqref{ibvp_usim} and a steady state via the PDE
\begin{equation}\label{PDE_peff}
\partial_t^\alpha u^\sim+(-\mathbb{L}+\peff(t))\usim=r-\rinf
\end{equation}
with the identity and notation
\[
\begin{aligned}
&f(u(t,x))-f(\uinf(x))=\int_0^1 f'(\theta u(t,x)+(1-\theta)\uinf(x))\,d\theta\,(u(t,x)-\uinf(x))\\
&\phantom{f(u(t,x))-f(\uinf(x))}
=:\bar{f}(t,x)\,(u(t,x)-\uinf(x)) \\
&\peff(t,x)=p(x)\bar{f}(t,x)-q(x)
\end{aligned}
\] 
is useful.
Testing \eqref{PDE_peff} with $(\mu-\mathbb{L})\usim$ under condition \eqref{boundpqf_int} leads to 
\begin{equation}\label{enest_usim_int}
\begin{aligned}
&
\min\{\mu,\, \underline{c}_D\}
\partial_t^\alpha\|\usim(t)\|_{H^1(\Omega)}^2
+\frac{\min\{\mu\,\min\{\tfrac{\underline{c}}{2},c_D\}-D(t),\,\frac12\}}{(C_{\text{ell}}^\Omega)^2}\,
\|\usim(t)\|_{H^2(\Omega)}^2
\\
&\leq 
\left(\frac{\mu}{\underline{c}}+2\right)\,
\|r(t)-\rinf\|_{L^2(\Omega)}^2\\
&\text{with }D(t)=C\Bigl(\bigl(\|p\|_{L^{\rrr}(\Omega)}\bigl(\Phi_\infty^1(\|\uinf\|_{H^1(\Omega)})+\Phi_\sim^1(\|\usim(t)\|_{H^1(\Omega)})\bigr)
+\|q\|_{L^2(\Omega)}\bigr)\,C_{H^2\to L^\infty}^\Omega\Bigr)^2
\end{aligned}
\end{equation}
in place of \eqref{enest_usim} with \eqref{estgrowth_m2}, where we have used
\begin{equation}\label{est_peff}
\begin{aligned}
&\|\peff(t)\usim(t)\|_{L^2(\Omega)}
&\leq \Bigl(\|p\|_{L^\rrr(\Omega)} \|\bar{f}(t)\usim(t)\|_{L^{2\rrr/(\rrr-2)}(\Omega)} + \|q\|_{L^2(\Omega)}\Bigr)\|\usim(t)\|_{L^\infty(\Omega)}
\end{aligned}
\end{equation}
(note that $D(t)$ is defined by only using $\|\usim(t)\|_{H^1(\Omega)}$ here in order to be able to make use of a large value of $\mu$ to dominate it) as well as (cf. \eqref{f0fprime0}) 
\[
|f'(\xi)|=|\int_0^\xi f''(\xi)\, d\xi| \leq c_2|\xi|+C_2|\xi|^{\kappa_2+1}
\]
which means that we obtain the same growth estimate for $f'(\xi)$ as for $f''(\xi)\xi$.
We choose $\mu$ large enough so that 
$D(0) < \frac12 \mu\,\min\{\tfrac{\underline{c}}{2},c_D\}$
and use a barrier argument to conclude that 
$D(t) < \frac12 \mu\,\min\{\tfrac{\underline{c}}{2},c_D\}$
remains valid for all $t>0$. 
Lemma~\ref{lem:eta} can thus be applied to yield \eqref{decay_usim} with $s=1$, provided $\|r(t)-\rinf\|_{L^2(\Omega)}^2=O(\decayfunc(t))$. 

Likewise, the time differentiated version with $\altil\in\{\alpha,1\}$
\[
(\partial_t^\alpha -\mathbb{L}+\peff)\partial_t^{\altil}\usim
=\peff\partial_t^{\altil}\usim-\partial_t^{\altil} (\peff\,u^\sim)\,+\partial_t^{\altil} r -\chi_{\alpha<1}\chi_{\altil=1}\kernel^\alpha(t)u_t(0).
\]
with 
\begin{equation}\label{comm_ut_int}
\partial_t^{\altil} (\peff\,u^\sim)-\peff\partial_t^{\altil}\usim
= p(\partial_t^{\altil}(\bar{f}\,u^\sim)-\bar{f}\,\partial_t^{\altil} u^\sim))
\end{equation}
implies
\[
\begin{aligned}
&\min\{\mu,\, \underline{c}_D\} 
\partial_t^\alpha\|\partial_t^{\altil}\usim(t)\|_{H^1(\Omega)}^2
+\frac{\min\{\mu\,\min\{\tfrac{\underline{c}}{2},c_D\},\,1\}}{4(C_{\text{ell}}^\Omega)^2}\, \|\partial_t^{\altil}\usim(t)\|_{H^2(\Omega)}^2
\\
&\leq 
\left(\frac{\mu}{\underline{c}}+2\right)\,
\|-p(\partial_t^{\altil}(\bar{f}\,u^\sim)-\bar{f}\,\partial_t^{\altil} u^\sim))+\partial_t^{\altil} r(t)-\chi_{\alpha<1}\chi_{\altil=1}\kernel^\alpha(t)u_t(0)\|_{L^2(\Omega)}^2,
\end{aligned}
\]
with $\mu$ chosen as above.
Bounding the right hand side requires a commutator estimate on \eqref{comm_ut_int}, which for $\altil<1$ we can only guarantee in the nonlocal sense \eqref{estDongLi}, thus not implying decay.
In case $\altil=1$, under the growth condition \eqref{growth} we get, exactly as in \eqref{estgrowth_m2_alpha1},
\[
\begin{aligned}
&\|-p(\partial_t(\bar{f}\,u^\sim)(t)-\bar{f}(t)\,\partial_t u^\sim(t)))\|_{L^2(\Omega)}\\
&\leq\|p\|_{L^{\rrr}(\Omega)}
\bigl(\Phi_\infty^1(\|\uinf\|_{H^1(\Omega)})+\Phi_\sim^1(\|\usim(t)\|_{H^1(\Omega)})\bigr)\|\usim(t)\|_{H^1(\Omega)}
\,\|\usim_t(t)\|_{H^2(\Omega)},
\end{aligned}
\]
where we can use the previously established decay estimate \eqref{decay_usim} with $s=1$ on $\usim(t)$ to deduce existence of $t_1\geq0$ such that 
\[\begin{aligned}
\left(\frac{\mu}{\underline{c}}+2\right)
\|p\|_{L^{\rrr}(\Omega)}
\bigl(\Phi_\infty^1(\|\uinf\|_{H^1(\Omega)})+\Phi_\sim^1(\|\usim(t)\|_{H^1(\Omega)})\bigr)^2\|\usim(t)\|_{H^1(\Omega)}^2
\\
\leq \frac{\min\{\mu\,\min\{\tfrac{\underline{c}}{2},c_D\},\,1\}}{4(C_{\text{ell}}^\Omega)^2}
\qquad t\geq t_1. 
\end{aligned}\]
From this, Theorem~\ref{thm:decay_usim_nosmallness} follows analogously to 
Theorems~\ref{thm:decay_usim} and \ref{prop:decay_usim_t}.
\section*{Appendix: proof of Lemma~\ref{lem:eta}}
We only provide the somewhat more involved case $\alpha\in(0,1)$ here.

First of all, note that the regularity assumption on $\eta$ imply continuity of $\eta$ and $\partial_t^\alpha\eta$ on $(0,T)$ and therefore the word ``almost'' can be skipped in \eqref{enest_eta}.
Since $\nu$ satisfies \eqref{enest_eta} with equality (cf., e.g. \cite[Theorem 5.4]{BBB}) for all $t\in(0,T)$, as well as $\nu(0)=\eta(0)$, we obtain, for the difference $\eta-\nu$,  
\[
\partial_t^\alpha(\eta-\nu)(t)+c_0(\eta-\nu)(t) \leq 0 \text{ for all }t\in(0,T),
\quad (\eta-\nu)(0)=0
\]
It suffices to conclude from this that $(\eta-\nu)(t) \leq 0$ for almost every $t\in(0,T)$.
To this end, we can proceed exactly as in the proof of (5.20) in \cite{KubicaRyszewskaYamamoto:2020}, see (5.21) there:
Assume contrarily that there exists $t_0\in(0,T]$ such that 
$(\eta-\nu)(t_0) = \max_{t\in[0,T]}(\eta-\nu)(t) > 0$. 
We can now use the maximum principle 
\begin{quote}
\textbf{Lemma} (\cite[Lemma 5.2]{KubicaRyszewskaYamamoto:2020})
Let $f\in W^{1,1}(0,T)$ attain its maximum over the interval $[0,T]$ at a point
$t_0 \in (0,T]$. If for each $\underline{t}\in(0,T)$, there exists $\beta\in(0, 1]$ such that $f\in W^{1,1/(1-\beta)} (\underline{t},T)$, then $(\partial_t^\alpha f )(t_0 ) \geq 0$ for every $\alpha\in(0, \beta)$.
\end{quote}
together with the fact that $\eta-\nu\in W^{1,\infty}(\underline{t},T)$ for any $\underline{t}\in(0,T)$) to conclude that $\partial_t^\alpha (\eta-\nu) (t_0 ) \geq 0$ and hence altogether 
$\partial_t^\alpha(\eta-\nu)(t_0)+c_0(\eta-\nu)(t_0) > 0$, a contradiction.\\
This proves \eqref{eta_leq_nu}.

In case $\phi(t)=\phi_0 t^{-\alpha}$ we can estimate similarly to the proof of \cite[Lemma 10.11]{BBB} make use of the asympotics of Mittag Leffler functions and their interrelations, cf., e.g., \cite[Section 3.4]{BBB}, more precisely
\[
\begin{aligned}
&c_0 s^{\alpha-1}E_{\alpha,\alpha}(-c_0 s^\alpha)=-\frac{d}{ds} E_{\alpha,1}(-c_0 s^\alpha)\\
&0\leq E_{\alpha,1}(-x)\leq E_{\alpha,1}(0)=1\\ 
&E_{\alpha,1}(-x)\leq \frac{1}{1+\Gamma(1+\alpha)^{-1}x}, \quad
E_{\alpha,\alpha}(-x)\leq C_\alpha x^{-1}
\end{aligned}
\]
that hold for every $\alpha\in(0,1)$, 
all $s\in\mathbb{R}$ and all $x\in\mathbb{R}^+$,
as well as the complete monotonicity of the function $x\mapsto E_{\alpha,1}(-x)$ on $\mathbb{R}^+$. 
This allows us to estimate as follows 
\[
\begin{aligned}
\int_0^t&(t-s)^{\alpha-1}E_{\alpha,\alpha}(-c_0 (t-s)^\alpha) \,s^{-\alpha}\,ds\\
&= \int_0^{\frac{t}{2}} s^{\alpha-1}E_{\alpha,\alpha}(-c_0 s^\alpha)  (t-s)^{-\alpha}\,ds 
+\int_{\frac{t}{2}}^t s^{\alpha-1}E_{\alpha,\alpha}(-c_0 s^\alpha)  (t-s)^{-\alpha}\,ds
\\
&= \frac{\alpha}{c_0}\int_0^{ \frac{t}{2}} E_{\alpha,1}(-c_0 s^\alpha)  (t-s)^{-\alpha-1}\,ds 
-  \frac{\alpha}{c_0}t^{-\alpha}
+ \frac{\alpha}{c_0}E_{\alpha,1}\left(-c_0 \left(\frac{t}{2}\right)^\alpha\right)  \left(\frac{t}{2}\right)^{-\alpha}\\
&\qquad\qquad\qquad + 
\int_{ \frac{1}{2}}^1  \sigma^{\alpha-1} (1\!-\!\sigma)^{-\alpha}\,E_{\alpha,\alpha}(-c_0 t^\alpha \sigma^\alpha)\,d\sigma
\\
&\leq \frac{\alpha}{c_0}\int_0^{ \frac{t}{2}} (t\!-\!s)^{-\alpha-1}\,ds 
+ \frac{\alpha}{c_0} \left(\frac{t}{2}\right)^{-\alpha} +  \frac{C_\alpha}{c_0} \left(\frac{t}{2}\right)^{-\alpha}\int_{\frac{1}{2}}^1  \sigma^{\alpha-1} (1\!-\!\sigma)^{-\alpha}\,d\sigma\\
&\leq \frac{C(\alpha)}{c_0} \, t^{-\alpha}\qquad\text{ with }
C(\alpha)=2^\alpha\,\left(1+\alpha+C_\alpha\int_{\frac{1}{2}}^1  \sigma^{\alpha-1} (1\!-\!\sigma)^{-\alpha}\,d\sigma\right)\,,
\end{aligned}
\]
where we have used the substitution $s=t\sigma$.

In case of nonnegative $\eta$, we make use of 
\[
\begin{aligned}
&(\kernel^{1-\alpha}*\partial_t^\alpha v)(t)=v(t) \text{ for }v\in W^{1,1}(0,T)\text{ with }v(0)=0\\
&(\kernel^{1-\alpha}*v)(t)\geq0 \text{ for }v\in L^1(0,T)\text{ with }v(t)\geq0 \text{ for almost every }t\in(0,T)
\end{aligned}
\]
and convolve both sides of 
\[
\partial_t^\alpha(\eta(t)-\eta(0))+c_0\eta(t) \leq \phi(t) \text{ for almost every }t\in(0,T),
\]
that follows directly from \eqref{enest_eta}, with $\kernel^{1-\alpha}$ to arrive at 
\begin{equation}\label{etak1malpha}
\eta(t)-\eta(0) + c_0 \, (\kernel^{1-\alpha}*\eta)(t)\leq (\kernel^{1-\alpha}*\phi)(t)
\end{equation} 
Due to nonnegativity of the $c_0$ term, this yields \eqref{bound_eta_phi}.
\\
To conclude \eqref{bound_eta} from \eqref{bound_eta_phi} under the imposed decay condition of $\phi$, we 
do a substitution $s=t\sigma$ as above to obtain
\begin{equation}\label{estconst}
\begin{aligned}
&\int_0^t (t-s)^{\alpha-1}s^{-\alpha} \,ds=\int_0^1 (1-\sigma)^{\alpha-1}\sigma^{-\alpha} \,d\sigma \\
&\leq \int_0^{\frac12} (1-\sigma)^{\alpha-1}\sigma^{-\alpha} \,d\sigma + \int_{\frac12}^1 (1-\sigma)^{\alpha-1}\sigma^{-\alpha} \,d\sigma\\
&\leq 2^{1-\alpha} \int_0^{\frac12} \sigma^{-\alpha} \,d\sigma +2^{\alpha}\int_{\frac12}^1 (1-\sigma)^{\alpha-1} \,d\sigma
=\frac{1}{1-\alpha}+\frac{1}{\alpha}.
\end{aligned}
\end{equation} 

The bound \eqref{bound_eta_0} with $\phi(t)\equiv \phi_0$ follows from
$E_{\alpha,1}(-c_0t^\alpha)\leq E_{\alpha,1}(0)=1$ and $\int_0^t (t-s)^{\alpha-1}E_{\alpha,\alpha}(-c_0(t-s)^\alpha)\, ds=\frac{1}{c_0}(E_{\alpha,1}(0)-E_{\alpha,1}(-c_0t^\alpha))\leq\frac{1}{c_0}$.

\section*{Acknowledgments}
This research was funded in part by the Austrian Science Fund (FWF) 
[10.55776/F100800].
The author thanks the anonymous reviewers for their careful reading of the manuscript and their valuable comments.









\end{document}